\documentclass[10pt]{article}  	% use "amsart" instead of "article" for AMSLaTeX format
\usepackage{geometry}                		% See geometry.pdf to learn the layout options. There are lots.
\geometry{letterpaper}                   		% ... or a4paper or a5paper or ... 
\usepackage{graphicx}				% Use pdf, png, jpg, or epsÂ§ with pdflatex; use eps in DVI mode
								% TeX will automatically convert eps --> pdf in pdflatex	\textwidth      6.5in
\oddsidemargin  0.0in
\evensidemargin 0.0in
\setlength{\topmargin}{.0in}
\setlength{\textheight}{8.5in}
	
\usepackage{amssymb}
\usepackage{mathtools}
\usepackage{graphicx}
\usepackage{amsmath}
\usepackage{amsthm}
\usepackage{algorithm}
\usepackage{algpseudocode}
\usepackage{tikz}
\usepackage{parskip}
\usepackage{hyperref}
\usepackage[normalem]{ulem} 
\usepackage{authblk}
\usepackage{ulem}
\usepackage{tabularx}

\usepackage{subcaption}

\setlength{\parindent}{15pt}

\graphicspath{ {\string~/Desktop/} }

\newtheorem{theorem}{Theorem}[section]
\newtheorem{lemma}[theorem]{Lemma}

\theoremstyle{definition}
\newtheorem{definition}[theorem]{Definition}

\theoremstyle{definition}
\newtheorem{example}[theorem]{Example}
\newtheorem{remark}[theorem]{Remark}

\DeclareMathOperator*{\argmin}{argmin}

\newcommand{\R}{{\mathbb{R}}}
\newcommand{\C}{{\mathbb{C}}}

\newcommand{\F}{{\mathbb{F}}}

\newcommand{\Fb}{ \boldsymbol{{\mathbb{F}}}}
\newcommand{\Bc}{\boldsymbol{{\mathcal{B}}}}
\newcommand{\Mc}{\boldsymbol{{\mathcal{M}}}}
\newcommand{\Lc}{ \boldsymbol{{\mathcal{L}}}}
\newcommand{\Xc}{ \boldsymbol{{\mathcal{X}}}}
\newcommand{\Tc}{\boldsymbol{{\mathcal{T}}}}
\newcommand{\Uc}{\boldsymbol{{\mathcal{U}}}}
\newcommand{\Sc}{\boldsymbol{{\mathcal{S}}}}
\newcommand{\Rc}{\boldsymbol{{\mathcal{R}}}}
\newcommand{\Zc}{\boldsymbol{{\mathcal{Z}}}}
\newcommand{\Yc}{\boldsymbol{{\mathcal{Y}}}}

\newcommand{\Oc}{\boldsymbol{{\mathcal{O}}}}

\newcommand{\Vc}{\boldsymbol{{\mathcal{V}}}}

\newcommand{\diag}{{\rm{diag}}}
\newcommand{\rank}{{\rm{rank}}}

\DeclareMathOperator*{\vecc}{{\rm{vec}}}

\DeclareMathOperator*{\im}{{\rm{im}}}

\DeclareMathOperator*{\mat}{{\rm{mat}}}

\DeclareMathOperator*{\ten}{{\rm{ten}}}

\newcommand{\svd}{{\rm{svd}}}
% vectors

\newcommand{\vb}{\mathbf{b}}
\newcommand{\vc}{\mathbf{c}}

\newcommand{\vr}{\mathbf{r}}

\newcommand{\vu}{\mathbf{u}}
\newcommand{\vv}{\mathbf{v}}

\newcommand{\vx}{\mathbf{x}}
\newcommand{\vy}{\mathbf{y}}
\newcommand{\vz}{\mathbf{z}}

% Matrices (uppercase bold letters)
\newcommand{\mA}{\mathbf{A}}
\newcommand{\mB}{\mathbf{B}}
\newcommand{\mC}{\mathbf{C}}

\newcommand{\mE}{\mathbf{E}}

\newcommand{\mG}{\mathbf{G}}
\newcommand{\mH}{\mathbf{H}}
\newcommand{\mI}{\mathbf{I}}
\newcommand{\mJ}{\mathbf{J}}

\newcommand{\mL}{\mathbf{L}}
\newcommand{\mM}{\mathbf{M}}

\newcommand{\mP}{\mathbf{P}}

\newcommand{\mR}{\mathbf{R}}
\newcommand{\mS}{\mathbf{S}}

\newcommand{\mU}{\mathbf{U}}
\newcommand{\mV}{\mathbf{V}}
\newcommand{\mW}{\mathbf{W}}
\newcommand{\mX}{\mathbf{X}}
\newcommand{\mY}{\mathbf{Y}}
\newcommand{\mZ}{\mathbf{Z}}

\theoremstyle{remark}

%%%%%%%%%%%%%%%%%%%%%%%%%%%%%%%%%%%%%%%%%%%%%%%%%%%%%%%
% % Title and author(s)
%%%%%%%%%%%%%%%%%%%%%%%%%%%%%%%%%%%%%%%%%%%%%%%%%%%%%%%
\title{The Generalized Matrix Separation Problem: Algorithms}

%\author{Xuemei Chen}
%\author{Owen Deen}
%\affil{Department of Mathematics and Statistics, University of North Carolina Wilmington}

\author{Xuemei Chen\thanks{
                  Department of Mathematics and Statistics, University of North Carolina, Wilmington NC, USA. Email: chenxuemei@uncw.edu}
         \ and Owen Deen\thanks{
                  Department of Mathematics, University of Maryland, College Park MD, USA. Email: odeen@umd.edu }%\thanks{This work was
                  %supported by NSB grant number G983578765401.}
        }

\begin{document}

\maketitle

\begin{abstract}
When given a generalized matrix separation problem, which aims to recover a low rank matrix $L_0$ and a sparse matrix $S_0$ from $M_0=L_0+HS_0$, the work \cite{CW25} proposes a novel convex optimization problem whose objective function is the sum of the $\ell_1$-norm and nuclear norm. In this paper we detail the iterative algorithms and its associated computations for solving this convex optimization problem. We present various efficient implementation strategies, with attention to practical cases where $H$ is circulant, separable, or block structured. Notably, we propose a preconditioning technique that drastically improved the performance of our algorithms in terms of efficiency, accuracy, and robustness. While this paper serves as an illustrative algorithm implementation manual, we also provide theoretical guarantee for our preconditioning strategy. Numerical results demonstrate the effectiveness of the proposed approach.
\end{abstract}

\section{Introduction} \label{sec:intro}
%This paper addresses the problem of recovering structured signals from high-dimensional data through matrix decomposition. The goal is to separate a matrix into a low-rank component, capturing the underlying structure, and a sparse component, representing noise or outliers. Matrix separation has applications in image processing, signal analysis, and related fields \cite{CLMW11}.
 
 Given a known matrix $\mM_0 \in \R^{m \times n}$ and a known linear operator $\mH \in \mathbb{R}^{m\times p}$, the \emph{generalized matrix separation problem} consists of recovering a low-rank matrix $\mL_0 \in \mathbb{R}^{m \times n}$ and a sparse matrix $\mS_0 \in \R^{p\times n}$ such that
 \begin{equation}\label{equ:M0}
 \mM_0 = \mL_0 + \mH\mS_0.
 \end{equation} 
%Here $\mL_0$ captures the underlying low-dimensional structure of the data, while $\mS_0$ is sparse in the canonical basis but is observed only through the linear transformation $\mH$, so that $\mH\mS_0$ is not necessarily sparse.
 As shown in \eqref{equ:M0}, the sparse matrix component is filtered or masked by a known linear transformation $\mH \in \mathbb{R}^{m \times p}$, and the product $\mH S_0$ is not necessarily sparse.
  
The separation problem \eqref{equ:M0} was first formulated in \cite{CW25}, which  proposes the following convex optimization problem to recover $\mL_0$ and $\mS_0$:
\begin{equation}\label{equ:p}
(\hat \mS, \hat \mL) = \argmin_{\mS, \mL} \lambda \|\mS\|_1 + \|\mL\|_*, \quad \text{subject to } \mL + \mH\mS = \mM_0,
\end{equation}
where $\lambda>0$ is a regularization parameter, $\| \cdot \|_*$ is the nuclear norm, which is the convex envelope of the matrix rank on the unit spectral norm ball, and $\| \cdot \|_1$ is the vectorized $\ell_1$-norm, which is the convex envelope of sparsity on the unit $\ell_\infty$ ball. 
 
When $\mH$ is the identity matrix, the problem reduces to the standard robust PCA~\cite{LCM10, C11, CLMW11} formulation: 
\begin{equation}\label{eq:matrix_sep} 
(\hat \mS, \hat \mL)=\argmin_{\mS, \mL}\lambda\|\mS\|_1+\|\mL\|_*, \quad\text{subject to }\mL+\mS=\mM_0.
\end{equation}

There are many algorithms for solving the robust PCA problem \eqref{eq:matrix_sep}. The alternating direction method of multipliers (ADMM)~\cite{BS11} algorithm is a popular choice \cite{wang2013solving, YXYJ09, LCM10}. An alternating projection algorithm for a nonconvex formulation is presented in \cite{netrapalli2014non}, and an accelerated version that significantly improves accuracy is proposed in \cite{CCW19}. The work~\cite{dutta2019nonconvex} also belongs to alternating projection methods but treats the constraint $\mL+\mS=\mM_0$ as a separate projection step. The work in \cite{Ce20} proposes another method for a nonconvex formulation using a CUR approximation.

While closely related to Robust PCA, the generalized matrix separation problem considered in this work is more flexible. Robust PCA typically assumes a decomposition of the form $\mM_0=\mL+\mS$, where $\mL$ is low-rank and $S$ is sparse. In contrast, the formulation studied here allows for more general linear operators acting on the sparse component through $\mH$, leading to a broader class of matrix separation problems. Consequently, the algorithms developed in this work are designed specifically for this generalized formulation and differ from methods developed for the classical Robust PCA setting.

The matrix $\mH$ can be of any shape and may be rank-deficient. However, we generally require $\mH$ to behave well enough that it is injective on $\Omega(\mS_0)$, the set of all $p \times n$ matrices whose support lies within the support of $\mS_0$. See Definition~\ref{def:rinp} for details.
%We refer the reader to \cite{CW25} for more details.

The setup \eqref{equ:M0} and the convex optimization problem \eqref{equ:p} are new. In this paper, we address the iterative steps for solving \eqref{equ:p} using ADMM in detail. ADMM was introduced in \cite{glowinski1975approximation} and \cite{gabay1976dual}, and gained popularity due to its effectiveness in large-scale data analysis and its applications in signal processing and machine learning. Part of its appeal is that the algorithm’s updates lend themselves to parallel implementation. We refer the readers to \cite{BS11} for an excellent introduction.

\subsection*{Contribution and Organization}
The contributions of this paper are twofold: 
\begin{itemize}
\item We provide detailed and efficient algorithms for solving \eqref{equ:p}, along with implementable pseudocode. %Practical special cases, such as circulant $\mH$, are addressed. 
Moreover, Section~\ref{sec:tensor} examines the case where $\mM$ arises from a tensor, with the assumption that the filter $\mH$ is separable.
%which introduces memory and computational challenges. To address this, we focus our computations on separable filters that require fewer bits to store and allow for faster application to data.
\item We propose a preconditioning technique that significantly improves performance in terms of both efficiency and accuracy. The preconditioning does not change the row space or column space of $\mH$. Theorem~\ref{thm:prec} provides a theoretical guarantee for using the preconditioned filter under certain incoherence conditions on $\mH$, $\mL_0$, and $\mS_0$. Numerical experiments show that this technique is very crucial and significant for recovery guarantee. 
\end{itemize}

%In particular, for videos, circulant, and preconditioning... 

%We solve this optimization problem using the alternating direction method of multipliers (ADMM), a standard technique for convex problems with separable objective functions and linear constraints. 

%ADMM introduces an augmented Lagrangian and performs iterative updates by alternating between minimizing over each variable and updating the dual variable.

%We will solve \eqref{equ:p} using the Alternating Directions Method of Multipliers (ADMM), an iterative method that decomposes the optimization into subproblems, making it suitable for high-dimensional and noisy data.

The remainder of the paper is organized as follows: Section~\ref{sec:prelim} introduces the notation and necessary preliminaries. Section~\ref{sec:iter1d} details the mathematical framework for the matrix separation problem, followed by the preconditioning technique in Section~\ref{sec:prec} and its extension to video data in Section~\ref{sec:tensor}. Extensive numerical results are presented in Section~\ref{sec:numex}. Section~\ref{sec:pvn} uses four experiments to demonstrate the superior performance of the preconditioning technique. Section~\ref{sec:rs} explores the maximum rank and sparsity allowed with two different choices of $H$ and three sparsity models. Section~\ref{sec:vid} applies our algorithm to a simultaneous background removal and deblurring problem on real video data. We end with a discussion including future work.

\section{Notations \& Preliminaries} \label{sec:prelim}

We use bold lowercase letters such as $\vx$ and $\vy$ to represent vectors, bold uppercase letters such as $\mX$ and $\mY$ for matrices, and cursive script such as $\boldsymbol{\mathcal{T}}$ for tensors.

Let $\F$ be either $\R$ or $\C$. Given $p \geq 1$, the \emph{$\ell_p$-norm} of a vector $\vx = (x_1, x_2, \cdots, x_n) \in \F^{n}$ is defined as $\|\vx\|_p = \left( \sum_{i=1}^n |\vx_i|^p \right)^{1/p}$.

For a matrix $\mX = (x_{ij}) \in \F^{m \times n}$, the matrix norms are defined as follows:
\begin{itemize}
\item The \emph{Frobenius norm} of $\mX$ is $\|\mX\|_F = \sqrt{\sum_{i=1}^m \sum_{j=1}^n |x_{ij}|^2}$.
\item Both the $\ell_1$-norm and the infinity norm of $\mX$ are based on its vectorization. That is, 
 $\|\mX\|_1 = \sum_{i=1}^m \sum_{j=1}^n |x_{ij}|$, and $\|\mX\|_\infty=\max_{i,j}|x_{ij}|$.
\item The \emph{nuclear norm} of $\mX$ is $\|\mX\|_* = \sum_{i=1}^r \sigma_i$, where $\sigma_1, \ldots, \sigma_r$ are the nonzero singular values of $\mX$ and $r = \operatorname{rank}(\mX)$.
\item The \emph{spectral norm} of $\mX$ is denoted by $\|\mX\|$. It is the largest singular value of $\mX$.
\end{itemize}

The transpose of $\mX$ is denoted by $\mX^\top$, and its conjugate transpose by $\mX^*$. For a matrix $\mX = (x_{ij}) \in \mathbb{F}^{n \times n}$, the vector of diagonal elements is defined as $\diag(\mX) = [x_{11}, x_{22}, \ldots, x_{nn}]^\top$. Conversely, given a vector $\vv$, $\diag(\vv)$ is the diagonal matrix whose diagonal entries are given by $\vv$. We let $\mI_k$ denote the $k \times k$ identity matrix, and $\mJ_{m,n}$ denote the $m \times n$ matrix of all ones.

The \emph{Hadamard product} of two matrices $\mX = (x_{ij}) \in \mathbb{F}^{m \times n}$ and $\mY = (y_{ij}) \in \mathbb{F}^{m \times n}$ is denoted by $\mX \odot \mY$ and defined elementwise as $(\mX \odot \mY)_{ij} = x_{ij} y_{ij}$ for all $i$ and $j$. Likewise, we use $\oslash$ to denote pointwise division.

For a vector $\vx \in \mathbb{F}^{mn}$, we define $\im_{m,n}(\vx)$ as the column-wise reshaping of $\vx$ into an $m \times n$ matrix:
\[
\im_{m,n}(\vx) := \begin{bmatrix} x_1 & x_{m+1} & \cdots & x_{m(n-1)+1} \\ x_2 & x_{m+2} & \cdots & x_{m(n-1)+2} \\ \vdots & \vdots & & \vdots \\ x_m & x_{2m} & \cdots & x_{mn} \end{bmatrix}.
\]
We may write $\im(\vx)$ when the intended shape is clear from context.

The \emph{vectorization} of a matrix $\mX = (x_{ij}) \in \F^{m \times n}$ is the inverse of $\im(\cdot)$, and is defined as $$\vecc(\mX) = [x_{11}, x_{21}, \cdots, x_{(m-1), n}, x_{m,n}]^\top.$$

\begin{definition}[Soft Thresholding Operator] \label{def:3.2}
\normalfont
The \emph{soft thresholding operator} with parameter $\lambda$, denoted by $S_{\lambda}(\cdot)$, is a mapping from $\R^n$ to $\R^n$. For any $\vy \in \R^n$ and each index $i$, where $1 \leq i \leq n$, the operator is defined as
\[
(S_{\lambda}(\vy))_i = 
\begin{cases} 
      y_i - \lambda & \text{if } y_i >  \lambda, \\
      y_i + \lambda & \text{if } y_i < -\lambda, \\
      0 & \text{if } -\lambda \leq y_i \leq \lambda.
\end{cases}
\]
\end{definition}

It is well known that
\begin{equation}\label{equ:st}
S_\lambda(a)=\argmin_{b\in\R}\left(\lambda|b|+\frac{1}{2}(b-a)^2\right).
\end{equation}

When applies to a vector or a matrix, $S_\lambda(\cdot)$ applies entry-wise.
%For a matrix $\mM$, $S_\lambda(\mM)$ is to apply soft thresholding to each entry of the matrix $\mM$.
%We can apply Definition \ref{def:3.2} to a matrix by performing an entrywise thresholding. Let $M= (m_{ij}) \in \mathbb{R}^{m\times n}$, we find $S_{\lambda}(M)$ by applying the soft thresholding operator to each entry, namely $S_{\lambda}(m_{ij})$.

\begin{definition}[Singular value thresholding operator] \label{def:3.4}
\normalfont
Given a matrix $\mY \in \mathbb{R}^{m\times n}$, we denote $D_\lambda(\mY)$ as the \emph{singular value thresholding operator}, 
$$
D_\lambda(\mY)= \mU S_{\lambda}(\mathbf{\mathbf{\Sigma}})\mV^\top,\text{ where }\mY = \mU\mathbf{\mathbf{\Sigma}} \mV^\top \text{ is any singular value decomposition of }\mY.
$$
\end{definition}

Analogous to \eqref{equ:st}, the following result is well established.
\begin{theorem}[\cite{CCS10}] \label{thm:svs} For $\lambda > 0$ and $\mY \in \mathbb{R}^{m\times n}$, then
$$
D_{\lambda}(\mY) = \argmin_{\mX\in\R^{m\times n}}\left(\lambda\| \mX \|_* + \frac{1}{2}\| \mX - \mY \|^2_F\right).
$$

\end{theorem}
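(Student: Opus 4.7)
My plan is to exploit strict convexity plus a subdifferential check. The objective $F(X) = \lambda\|X\|_* + \tfrac12\|X - Y\|_F^2$ is the sum of a convex function and a strictly convex one, so the minimizer $X^\star$ is unique and characterized by the Fermat rule $0 \in \lambda\,\partial\|X^\star\|_* + (X^\star - Y)$, or equivalently $(Y - X^\star)/\lambda \in \partial\|X^\star\|_*$. The whole proof then reduces to verifying that $X^\star = D_\lambda(Y)$ satisfies this inclusion.

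For this I would invoke the standard characterization of the nuclear-norm subdifferential: at a matrix $X$ with (thin) SVD $X = U_r \Sigma_r V_r^\top$ of rank $r$,
\[
\partial\|X\|_* = \{\,U_r V_r^\top + W \ :\ U_r^\top W = 0,\ W V_r = 0,\ \|W\| \le 1\,\}.
\]
I would then take an SVD $Y = U\Sigma V^\top$ and split the singular values at the threshold $\lambda$, writing $Y = U_1 \Sigma_1 V_1^\top + U_2 \Sigma_2 V_2^\top$ where $\Sigma_1$ collects the singular values strictly greater than $\lambda$ and $\Sigma_2$ collects those at most $\lambda$, so that by Definition~\ref{def:3.4} we have $D_\lambda(Y) = U_1(\Sigma_1 - \lambda I)V_1^\top$. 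A direct computation yields
\[
Y - D_\lambda(Y) = \lambda U_1 V_1^\top + U_2 \Sigma_2 V_2^\top = \lambda\bigl(U_1 V_1^\top + W\bigr),
\qquad W := \tfrac{1}{\lambda}\, U_2 \Sigma_2 V_2^\top.
\]
The orthogonality relations $U_1^\top U_2 = 0$ and $V_1^\top V_2 = 0$ inherited from the SVD of $Y$ give $U_1^\top W = 0$ and $W V_1 = 0$, and the diagonal entries of $\Sigma_2/\lambda$ lie in $[0,1]$, so $\|W\| \le 1$. Hence $(Y - D_\lambda(Y))/\lambda \in \partial\|D_\lambda(Y)\|_*$, which is exactly the optimality condition, and uniqueness finishes the argument.

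The main obstacle is having the correct subdifferential formula for $\|\cdot\|_*$ in hand; once that is granted the verification is a short orthogonal-block computation. A secondary point of care is the non-uniqueness of the SVD when singular values are repeated, especially across the cutoff $\lambda$: one should group equal singular values together so that the splitting into $(\Sigma_1, \Sigma_2)$ is unambiguous and the orthogonality $U_1^\top U_2 = 0$, $V_1^\top V_2 = 0$ genuinely holds for the chosen SVD. With that caveat handled, no other case analysis is required.
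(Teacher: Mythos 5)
The paper does not prove this statement; it quotes it directly from \cite{CCS10}, and your argument is in fact the standard proof given there (Theorem~2.1 of that reference): uniqueness from strict convexity, the Watson characterization of $\partial\|\cdot\|_*$, and verification that $(Y-D_\lambda(Y))/\lambda = U_1V_1^\top + W$ lies in $\partial\|D_\lambda(Y)\|_*$. Your reasoning is correct; the only remark is that your caveat about repeated singular values is unnecessary, since for any full SVD $Y=U\Sigma V^\top$ the columns of $U$ (and of $V$) are mutually orthonormal, so $U_1^\top U_2=0$ and $V_1^\top V_2=0$ hold automatically no matter how ties at the threshold are assigned, and $D_\lambda(Y)$ itself is independent of the choice of SVD.
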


%\subsection{Properties of the Kronecker Product}

For $\mA \in \F^{m \times n}$, $\mB \in \mathbb{F}^{p \times q}$, the \emph{Kronecker product} $\mA \otimes \mB$ is an $mp \times nq$  matrix defined by
    \[
    \mA \otimes \mB = 
    \begin{bmatrix}
        a_{11}  B & a_{12}  B & \cdots & a_{1n} B \\
        a_{21}  B & a_{22} B & \cdots & a_{2n} B \\
        \vdots & \vdots &  & \vdots \\
        a_{m1}  B & a_{m2}  B & \cdots & a_{mn}  B \\
    \end{bmatrix}.
    \]
%The Kronecker product extends the dimensions of a matrix while preserving the information of the components. We will use the following properties to enhance computational efficiency for our iterative method. 

Let $\mX\in\F^{q\times n}$, then
\begin{equation}\label{equ:vec}
(\mA\otimes \mB)\vecc(\mX)=\vecc(\mB\mX\mA^\top).
\end{equation}

%Below are well-known properties of the Kronecker product.
%
%\begin{enumerate}    
%    \item Mixed-product property:
%    If there exists matrices $\mC,\mD$ that can multiply with $\mA$ and $\mB$ respectively, then
%    \[
%    (\mA \otimes \mB) \cdot (\mC \otimes \mD) = (\mA\mC) \otimes (\mB\mD).
%    \]
%    
%    
%    \item Inverse of a Kronecker product:
%    If $\mA$ and $\mB$ are invertible, then $(\mA \otimes \mB)^{-1} = \mA^{-1} \otimes \mB^{-1}$.
%    
%    \item Transpose of a Kronecker product:
%    \begin{align*}
%    \text{Transpose:  }(\mA \otimes \mB)^{\top} = \mA^{\top} \otimes \mB^{\top}. \\
%    \text{Conjugate Transpose: }(\mA \otimes \mB)^{*} = \mA^{*} \otimes \mB^{*}.
%    \end{align*}
%\end{enumerate}

The following lemma is a straightforward computation whose proof can be found in the Appendix.
\begin{lemma}\label{lem}
Let $\mA\in\mathbb{F}^{n\times n}$ and  $\mB\in\mathbb{F}^{m\times m}$ be diagonal matrices. Then
\begin{equation}\label{equ:lem}
\im_{m, n}(\diag(\mA\otimes \mB))=\diag(\mB)[\diag(\mA)]^\top
\end{equation}
\end{lemma}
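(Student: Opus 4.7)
The plan is to prove the identity by direct computation, exploiting the block structure of $A\otimes B$ when $A$ is diagonal. Writing $A=\diag(a_1,\ldots,a_n)$ and $B=\diag(b_1,\ldots,b_m)$, the definition of the Kronecker product makes $A\otimes B$ block-diagonal with diagonal blocks $a_1 B,\,a_2 B,\,\ldots,\,a_n B$, each of which is itself a diagonal $m\times m$ matrix. So the first step is simply to write out $\diag(A\otimes B)\in\F^{mn}$ explicitly as
$$
\diag(A\otimes B)=\bigl(a_1 b_1,\,a_1 b_2,\,\ldots,\,a_1 b_m,\;a_2 b_1,\,\ldots,\,a_2 b_m,\;\ldots,\;a_n b_1,\,\ldots,\,a_n b_m\bigr)^\top.
$$

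Next I would apply the definition of $\im_{m,n}$ given earlier in the preliminaries, which reshapes a length-$mn$ vector into an $m\times n$ matrix by stacking consecutive length-$m$ blocks as columns. Reading off the blocks of length $m$ from the vector above, the $k$-th column of $\im_{m,n}(\diag(A\otimes B))$ is exactly $a_k(b_1,\ldots,b_m)^\top = a_k\diag(B)$. Hence entry $(i,k)$ of the reshaped matrix equals $b_i a_k$.

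Finally, I would identify this with the claimed outer product: since $\diag(B)\in\F^{m\times 1}$ and $[\diag(A)]^\top\in\F^{1\times n}$, the product $\diag(B)[\diag(A)]^\top$ is the rank-one $m\times n$ matrix whose $(i,k)$ entry is $b_i a_k$. This matches the entries computed from the reshape, yielding \eqref{equ:lem}.

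There is no real obstacle here; the only point requiring minor care is keeping the column-major convention of $\im_{m,n}$ consistent with the block ordering of $A\otimes B$, so that the slower-varying index (corresponding to $A$) indexes the columns and the faster-varying index (corresponding to $B$) indexes the rows. Once that convention is respected, the identity follows by a one-line comparison of entries.
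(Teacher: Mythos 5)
Your proposal is correct and follows essentially the same route as the paper's proof: write $\diag(A\otimes B)$ explicitly as the concatenation of the blocks $a_k\diag(B)$, apply the column-major reshape $\im_{m,n}$ so that the $k$-th column is $a_k\diag(B)$, and identify the result with the outer product $\diag(B)[\diag(A)]^\top$. No gaps; the care you note about the column-major convention is exactly the only point of substance.
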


\section{Iterative Steps for the Generalized Matrix Separation Problem} \label{sec:iter1d}

Given convex functions $f$ and $g$, matrices $\mA, \mB$, and vector $\vc$, ADMM seeks to solve the problem 
\begin{equation}\label{equ:admm}
\min_{\vx, \vz} \{f(\vx)+g(\vz)\}, \quad\text{subject to }\mA\vx+\mB\vz=\vc.
\end{equation}
With an appropriately chosen parameter $\rho$, the general steps of ADMM for solving \eqref{equ:admm} is given below  \cite[(3.5)--(3.7)]{BS11}:
\begin{align}\label{equ:admm1}
\vx^{k+1} &= \argmin_x \left(f(\vx)+\frac{\rho}{2}\|\mA\vx+\mB\vz^k-\vc+\vu^k\|_2^2\right)\\
\vz^{k+1} &= \argmin_z \left(g(\vz)+\frac{\rho}{2}\|\mA\vx^{k+1}+\mB\vz-\vc+\vu^k\|_2^2\right)\\\label{equ:admm3}
\vu^{k+1} &= \vu^k + \mA\vx^{k+1} + \mB\vz^{k+1}-\vc.
\end{align}

The convex optimization problem \eqref{equ:p} is well suited to be solved by ADMM since the objective function is a sum of two convex functions, allowing it to be reduced to solving two simpler subproblems. 
The ADMM steps for solving \eqref{equ:p} are:
%To solve \eqref{equ:p}, we can use the steps of ADMM from :
 \begin{align}\label{equ:L}
	 \mL^{k+1} &= \argmin_{\mL} \left(\| \mL\|_* + \rho/2\| \mL+\mH\mS^k- \mM+\mU^k\|_F^2\right)\\ \label{equ:S}
	\mS^{k+1} &= \argmin_{\mS} \left(\lambda \|\mS\|_1+\rho/2\| \mL^{k+1}+\mH\mS- \mM+\mU^k\|_F^2\right) \\ \label{equ:Y}
	\mU^{k+1} &= \mU^k + \mL^{k+1}+\mH\mS^{k+1} - \mM.
\end{align}
Note that the nuclear norm and $\ell_1$-norm terms are handled independently, allowing each subproblem to be solved efficiently.
%: the low-rank update uses singular value thresholding, and the sparse update uses soft-thresholding. 
The dual variable is then updated using the residual of the constraint $\mL + \mH\mS = \mM_0$. 

The solution to \eqref{equ:L} is exact and given by $\mL^{k+1} = D_{\rho^{-1}}(-\mH\mS^k + \mM - \mU^k)$ according to Theorem \ref{thm:svs}.

The problem \eqref{equ:S} is in the form of a Least Absolute Shrinkage and Selection Operator (LASSO) problem. Rewriting \eqref{equ:S}, we express the update for $\mS^{k+1}$ as
\begin{equation}\label{eq:lasso_S}
\mS^{k+1} = \argmin_\mS \left( \frac{\lambda}{\rho} \|\mS\|_1 + \frac{1}{2} \|\mH\mS - \mP\|_F^2 \right),
\end{equation}
where $\mP = -\mL^{k+1} + \mM - \mU^k$. 
To stay focused on our main content, we placed a thorough introduction of LASSO in the Appendix, where we choose 
 either FISTA (Algorithm \ref{alg:fista}) or ADMM (Algorithm \ref{alg:lasso2}) to solve it. 
Algorithm \ref{alg:gms} summarizes the ADMM steps for solving our generalized matrix separation problem.
%Together we have , using the stopping criteria as suggested in \cite{YXYJ09}.

\begin{algorithm}[htb]
\caption{Generalized Matrix Separation: GMS$(\mM_0, \mH, \lambda; \rho_O, T_O, T_I, \epsilon_O, \epsilon_I)$}\label{alg:gms}
\begin{algorithmic}
\State\textbf{Input:} $\mM_0\in\R^{m\times n}, \mH\in\R^{m\times p},  \lambda >0, \rho_O>0$, maximum number of iterations $T_O, T_I$ and the tolerance $\epsilon_O, \epsilon_I$.
\State\textbf{Output:} $\mS^{t} \in \R^{p\times n}, \mL^{t}\in\R^{m\times n}$: an approximation of solution of \eqref{equ:p} at last iteration $t$
\State\textbf{Initialize:} $\mL^0,\mU^0\in\R^{m\times n}, \mS_0 \in \R^{p \times n}$
\For{$k=0,1,\cdots,T_O-1$} 
\State 1: $ \mL^{k+1} = D_{\rho_O^{-1}}(-\mH\mS^k+\mM_0-\mU^k)$
\State 2: $\mS^{k+1} = \text{LASSO}(\mH, \mM_0 -\mU^k-\mL^{k+1}, \lambda/\rho_O; T_I, \epsilon_I) $ for Algorithm \ref{alg:fista} or 
\State \hspace{0.11in} $\mS^{k+1} = \text{LASSO}(\mH, \mM_0 -\mU^k-\mL^{k+1}, \lambda/\rho_O, \rho_I, T_I, \epsilon_I) $ for Algorithm \ref{alg:lasso2}
\State 3: $\mU^{k+1} = \mU^k + \mL^{k+1} +\mH\mS^{k+1}-\mM_0 $
\State Terminate if $\frac{\| (\mS^{k+1}, \mL^{k+1}) - (\mS^k, \mL^k) \|_F}{\|(\mS^k, \mL^k)\|_F + 1} <\epsilon_O$ 
\EndFor
\end{algorithmic}
\end{algorithm}

\begin{remark}
While alternative stopping criteria are available, such as those proposed in \cite{BS11}, 
we choose a standard stopping criterion when the change between iterations are negligible. 
%we found empirically that the fixed tolerance stopping criterion used in Algorithm~\ref{alg:gms} required fewer outer iterations in our numerical experiments. 
Moreover, implementing a dynamic update scheme for the parameter $\rho$, as recommended in \cite{BS11} and \cite{GE14}, may also be considered.
\end{remark}

\subsection{Preconditioning of $H$}\label{sec:prec}

The convergence rate of ADMM (equations \eqref{equ:admm1}--\eqref{equ:admm3}) depends on the properties of the objective functions and constraints. The work \cite{DY16} demonstrates a linear convergence rate of ADMM and its generalized version under various conditions, provided that at least one of the two objective functions is strongly convex. A linear convergence rate was also proven in \cite{admm_par} using a dynamical system approach, where strong convexity of either $f$ or $g$ is again assumed.

For a differentiable function $f: \R^n \rightarrow \R$, it is \emph{strongly convex} with a constant $\alpha>0$ if 
\begin{equation}\label{equ:str_conv}
\langle \vx-\vy, \nabla f(\vx)-\nabla f(\vy)\rangle\geq\alpha \|\vx-\vy\|_2^2, \quad\text{for all }\vx, \vy\in\R^n.
\end{equation}
This definition can be generalized to non-smooth convex functions where the gradient is replaced by the subgradient.

It is easy to verify that $f(\vx)=\frac{1}{2}\|\mH\vx-\vb\|_2^2$ is strongly convex when $\mH$ has full column rank. Given $\nabla f(\vx)=\mH^\top(\mH\vx-\vb)$, we can compute that \eqref{equ:str_conv} holds with $\alpha$ being the smallest singular value of $\mH^\top \mH$.

The results in \cite{DY16} or \cite{admm_par}  do not apply to \eqref{equ:p} since neither $\|\cdot\|_*$ nor $\|\cdot\|_1$ is strongly convex. However, they do apply to the LASSO problem because one of the objective functions is $\frac{1}{2}\|\mH\vx-\vb\|_2^2$, which is strongly convex as mentioned above. The convergence of the subproblem \eqref{equ:S} (LASSO) greatly affects the convergence of Algorithm \ref{alg:gms}. In particular, using \cite[Corollary 3.6]{DY16}, we see that the convergence rate of Algorithm~\ref{alg:lasso2} depends on the condition number of $\mH$ (the ratio of the largest to smallest positive singular values of $\mH$).
This motivates our preconditioning technique which we introduce below.

%As a subproblem  is a LASSO problem and 
%In our numerical experiments, we have utilized a preconditioning technique which  greatly improves the recovery results.

Let $\mU_H\mathbf{\Sigma}_H\mV_H^\top $ be an SVD of $\mH$, where $\mU_H\in\R^{m\times \rank(\mH)}, \mV_H\in\R^{p\times\rank(\mH)}$.  If we left multiply the constraint equation \eqref{equ:M0} with the positive semidefinite matrix $\mC=\mU_H\mathbf{\Sigma}_\mH^{-1}\mU_H^\top$, then we have 
\begin{equation}\label{equ:CH}
\mC\mM_0=\mC\mH\mS_0 + \mC\mL_0.
\end{equation} 
The new filter is $\tilde \mH =\mC\mH=\mU_\mH\mV_\mH^\top$. Like $\mH$, the preconditioned $\tilde \mH$ may be rank deficient, but $\tilde \mH$ has condition number 1. With this choice of $\mC$, $\tilde\mH$ and $\mH$ have exactly the same row space and column space, respectively.

The idea is to employ Algorithm \ref{alg:gms} with inputs $\mC\mM_0$ and $\tilde{\mH}$ (instead of $\mM_0$ and $\mH$), which will return approximations of $\mS_0$ and $\mC\mL_0$. Since $\mC$ is not necessarily invertible, recovery of $\mL_0$ (if desired) can be achieved using the constraint \eqref{equ:M0}.

To summarize, under this preconditioning strategy, the recovered sparse and low-rank components $\hat{\mS}_c$ and $\hat{\mL}_c$ are computed as
\begin{equation}\label{equ:p2}
\begin{cases}
(\hat{\mS}_c, \hat{\mY}) = \displaystyle \argmin_{\mS, \mY} \gamma \|\mS\|_1 + \|\mY\|_*, \quad \text{subject to } \mY + \tilde{\mH} \mS = \mC \mM_0, \\
\hat{\mL}_c = \mM_0 - \mH \hat{\mS}_c.
\end{cases}
\end{equation}

%We further list the ADMM steps of solving \eqref{equ:p2} in Algorithm \ref{alg:gms_c}.
The preconditioning steps are explicitly listed in Algorithm \ref{alg:gms_c} for completeness.
\begin{algorithm}[htb]
\caption{ Preconditioned Generalized Matrix Separation: PGMS$(\mM_0, \mH, \lambda; \rho_O, T_O, T_I, \epsilon_O, \epsilon_I)$}\label{alg:gms_c}
\normalsize
\begin{algorithmic}
\State\textbf{Input:} $\mM_0\in\R^{m\times n}, \mH\in\R^{m\times p},  \lambda >0, \rho_O>0$, maximum number of iterations $T_O, T_I$ and the tolerance $\epsilon_O, \epsilon_I$.
\State\textbf{Output:} $ \mS^{t} \in \R^{p\times n}, \mL^{t}\in\R^{m\times n}$: an approximation of solution of \eqref{equ:p} at last iteration $t$
\State $(\mU_H, \mathbf{\Sigma}_H, \mV_H) = \svd(\mH)$
\State $\tilde \mH = \mU_H\mV_H^\top$
\State $(\mS^t, \mY) = \text{GMS}(\mU_H\mathbf{\Sigma}_H^{-1}\mU_H^\top \mM_0, \tilde \mH, \lambda; \rho_O, T_O, T_I, \epsilon_O, \epsilon_I))$ \Comment{Algorithm \ref{alg:gms}}
\State $\mL^t = \mM_0-\mH\hat \mS^t$
\end{algorithmic}
\end{algorithm}

The work~\cite{CW25} provides incoherence conditions on $\mS_0$ and $\mL_0$ under which the recovery  by the program \eqref{equ:p} is successful. First, the filter $\mH$ must satisfy a so-called restricted infinity norm property. We restate \cite[Definition 2.1]{CW25} below.
\begin{definition}\label{def:rinp}
Given a matrix $\mS_0\in \R^{p\times n}$ and $0<\delta <1$, we say a matrix $\mH$ of dimension $m\times p$ has the $\mS_0$-$\delta$-\emph{restricted infinity norm property} ($\mS_0$-$\delta$-RINP) if 
\begin{equation}\label{equ:Hrip}
\|(\mI-\mH^\top \mH)\mA\|_\infty\leq\delta\|\mA\|_\infty \text{ for all }\mA\in\Omega(\mS_0).
\end{equation}
\end{definition}

Now we state the two quantities and the incoherence condition below.
\begin{equation}\label{equ:muH}
\mu_\mH(\mS):=\max_{\mA\in\Omega(\mS), \|\mA\|_\infty\leq1}\|\mH\mA\|.
\end{equation}

\begin{equation}\label{equ:xiH}
\xi_\mH(\mL):=\max_{\mB\in \Tc(\mL), \|\mB\|\leq1}\|\mH^\top \mB\|_\infty,
\end{equation}
where $\Tc(\mL)$ is  the tangent space at matrix $\mL$ with respect to the variety of all matrices with rank less than or
equal to rank$(\mL)$~\cite{C11}.

\begin{theorem}[{\cite[Theorem 2.7]{CW25}}]\label{thm:main}
Given $\mM_0=\mH\mS_0+\mL_0$ where  $\mH$ satisfies  \eqref{equ:Hrip} with $\delta<1/3$. If
\begin{equation}\label{equ:incoherence}
\mu_\mH(\mS_0)\xi_\mH(\mL_0)<\frac{1-3\delta}{6},
\end{equation}
then there exists $\gamma>0$ such that for any optimizer $(\hat \mS, \hat \mL)$ of \eqref{equ:p}, we must have $\hat \mS=\mS_0, \hat \mL=\mL_0$.  
\end{theorem}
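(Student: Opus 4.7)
The plan is to establish recovery by a dual-certificate/subgradient argument, which is the standard route for combined $\ell_1$ plus nuclear norm recovery theorems (e.g., the analysis of robust PCA in \cite{CLMW11}). Any feasible pair for \eqref{equ:p} can be written as $(S_0+\Delta,\, L_0-H\Delta)$ for some $\Delta\in\R^{p\times n}$, so it suffices to show that
$$
\lambda\|S_0+\Delta\|_1 + \|L_0-H\Delta\|_* \;>\; \lambda\|S_0\|_1 + \|L_0\|_*
$$
for every $\Delta\neq 0$. I would exploit the subdifferential characterizations: $\partial\|\cdot\|_1$ at $S_0$ consists of $\mathrm{sign}(S_0)+F$ with $F$ supported on $\Omega^c:=\mathrm{supp}(S_0)^c$ and $\|F\|_\infty\leq 1$, and $\partial\|\cdot\|_*$ at $L_0$ consists of $U_LV_L^\top+W$ with $W\in\Tc(L_0)^\perp$ and $\|W\|\leq 1$.

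First, I would pick the subgradients to extract the ``off-support'' and ``off-tangent'' magnitudes of the perturbation. Choose $F$ to be a sign pattern on $\Omega^c$ that realizes $\langle F,\Delta\rangle = \|P_{\Omega^c}\Delta\|_1$, and, by spectral-nuclear duality, choose $W\in\Tc(L_0)^\perp$ with $\|W\|\leq 1$ so that $\langle W,-H\Delta\rangle = \|P_{\Tc^\perp}(H\Delta)\|_*$. Feeding these into the two subgradient inequalities gives the lower bound
$$
\lambda\|S_0+\Delta\|_1+\|L_0-H\Delta\|_* - \lambda\|S_0\|_1-\|L_0\|_* \;\geq\; \lambda\|P_{\Omega^c}\Delta\|_1 + \|P_{\Tc^\perp}(H\Delta)\|_* + \lambda\langle\mathrm{sign}(S_0),\Delta\rangle - \langle U_LV_L^\top,H\Delta\rangle.
$$
The two ``good'' terms are nonnegative, so the task reduces to controlling the two ``cross'' terms by the good ones.

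Next I would bring in the hypotheses. For the $\ell_1$ cross term, use the identity $P_\Omega\Delta = H^\top H\,P_\Omega\Delta - (H^\top H-I)P_\Omega\Delta$: the RINP property \eqref{equ:Hrip} gives $\|(I-H^\top H)P_\Omega\Delta\|_\infty\leq \delta\|P_\Omega\Delta\|_\infty$, which lets us rewrite $\langle\mathrm{sign}(S_0),\Delta\rangle$ as an inner product involving $H\Delta$ plus a remainder of size $\delta\|P_\Omega\Delta\|_\infty$; splitting $H\Delta = P_\Tc(H\Delta)+P_{\Tc^\perp}(H\Delta)$ and applying the definition \eqref{equ:xiH} of $\xi_H(L_0)$ converts the $\Tc$-part to an $\|\cdot\|_\infty$-bound, while the $\Tc^\perp$-part is swallowed by $\|P_{\Tc^\perp}(H\Delta)\|_*$. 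For the nuclear norm cross term, apply \eqref{equ:muH}: $\|HP_\Omega\Delta\|\leq \mu_H(S_0)\|P_\Omega\Delta\|_\infty$, and combine with the standard fact $|\langle U_LV_L^\top, X\rangle|\leq \|X\|$ on the $\Tc$-part. Together these estimates produce a bound of the form $C\cdot\mu_H(S_0)\xi_H(L_0)\cdot\|P_\Omega\Delta\|_\infty$ plus a $\delta$-multiple of $\|P_\Omega\Delta\|_\infty$, which, thanks to the hypothesis $\mu_H(S_0)\xi_H(L_0)<(1-3\delta)/6$, is strictly dominated by $\lambda\|P_{\Omega^c}\Delta\|_1 + \|P_{\Tc^\perp}(H\Delta)\|_*$ for a suitable $\gamma=\lambda$ (lying in an interval whose endpoints depend on $\delta$, $\mu_H(S_0)$, and $\xi_H(L_0)$).

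The main obstacle I anticipate is the bookkeeping in step two: $H$ is neither an isometry nor injective in general, so one cannot simply bound $\|P_\Omega\Delta\|_\infty$ by $\|\Delta\|$-type quantities. The identity $P_\Omega\Delta = H^\top HP_\Omega\Delta - (I-H^\top H)P_\Omega\Delta$ combined with the RINP bound is what makes this possible, but it forces one to work in two different norms simultaneously (entrywise on the sparse side, operator-norm on the low-rank side). Getting the constants to close — in particular, extracting the precise numerical factor $1-3\delta$ in the denominator $6$ — requires careful choice of $\gamma$ so that the strict inequality survives for every nonzero $\Delta$, including the ``worst case'' where $\Delta$ is supported on $\Omega$ and $H\Delta$ lies entirely in $\Tc(L_0)$.
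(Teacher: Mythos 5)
First, a point of comparison: this paper does not actually prove Theorem~\ref{thm:main} --- it is imported (as a special case) from \cite[Theorem 2.7]{CW25}, and the only argument in this paper that touches it is the short reduction in Theorem~\ref{thm:prec}, which merely \emph{applies} Theorem~\ref{thm:main} to the preconditioned system $CM_0=\tilde HS_0+CL_0$. So there is no in-paper proof to measure your proposal against; I can only assess the sketch on its own terms.

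Your overall strategy (write a feasible competitor as $(S_0+\Delta,\,L_0-H\Delta)$, use subgradient inequalities for $\|\cdot\|_1$ at $S_0$ and $\|\cdot\|_*$ at $L_0$, use RINP to relate $P_\Omega\Delta$ to $H\Delta$, and use $\mu_H,\xi_H$ to control the cross terms) is the right family of argument and is surely close in spirit to the source. But as written there is a genuine gap at exactly the point you flag as the ``worst case.'' Your lower bound is $\lambda\|P_{\Omega^c}\Delta\|_1+\|P_{\Tc^\perp}(H\Delta)\|_*+\lambda\langle\mathrm{sign}(S_0),\Delta\rangle-\langle U_LV_L^\top,H\Delta\rangle$, and the plan is to dominate the two linear cross terms by the two nonnegative terms. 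When $\Delta\in\Omega(S_0)$ and $H\Delta\in\Tc(L_0)$, both nonnegative terms vanish while the cross terms are \emph{linear} in $\Delta$, so they can be made strictly negative by flipping the sign of $\Delta$; no bound of the form ``cross terms $\le C\,\mu_H\xi_H\,\|P_\Omega\Delta\|_\infty$, dominated by the good terms'' can survive, because the dominating side is zero. What is needed in that regime is a transversality statement --- $H$ injective on $\Omega(S_0)$ and $H(\Omega(S_0))\cap\Tc(L_0)=\{0\}$ --- which is precisely what \eqref{equ:Hrip} together with \eqref{equ:incoherence} must be shown to imply, and the near-degenerate regime (good terms positive but arbitrarily small relative to $\|\Delta\|$) then requires an honest dual certificate $Q$ with \emph{strict} bounds ($P_\Omega Q=\gamma\,\mathrm{sign}(S_0)$, $\|P_{\Omega^c}Q\|_\infty<\gamma$, $P_{\Tc}(\cdot)=U_LV_L^\top$, $\|P_{\Tc^\perp}Q\|<1$), typically built by an alternating-projection or Neumann-series construction whose convergence is exactly what a product condition of the form $\mu_H(S_0)\xi_H(L_0)<\mathrm{const}$ buys (cf.\ the rank--sparsity incoherence analysis underlying \cite{C11}). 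Choosing $F$ and $W$ \emph{adapted to} $\Delta$, as you do, yields only a non-strict inequality and cannot by itself give uniqueness of the optimizer. To close the proof you would need to (i) derive the transversality lemma from \eqref{equ:incoherence}, and (ii) construct the certificate and verify the strict bounds together with the admissible window for $\gamma$ --- which is where the specific constant $(1-3\delta)/6$ would actually emerge.
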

To be precise, Theorem \ref{thm:main} is only a special case of \cite[Theorem 2.7]{CW25} as it requires $\mH$ itself to satisfy RINP, instead of a slightly weaker assumption. However, this is sufficient for our analysis of the preconditioned method \eqref{equ:p2}.

Let $\mP_{\ker(\mH)}$ denote the orthogonal projection onto the kernel of $\mH$.
\begin{theorem}\label{thm:prec}
Given constraint $\mM_0=\mH\mS_0+\mL_0$ and let $\tilde \mH=\mC\mH$ be the preconditioned new filter as explained in \eqref{equ:CH}. If there exists $\delta<1/3$ such that
\begin{equation}\label{equ:Hrip2}
\|\mP_{\ker(\mH)}\mA\|_\infty\leq\delta\|\mA\|_\infty \text{ for all }\mA\in\Omega(\mS_0)
\end{equation}
and
\begin{equation}\label{equ:incoherence2}
\mu_{\tilde \mH}(\mS_0)\xi_{\tilde \mH}(\mC\mL_0)<\frac{1-3\delta}{6},
\end{equation}
then there exists $\gamma>0$ such that for any optimizer $(\hat \mS_c, \hat \mL_c)$ of \eqref{equ:p2}, we must have $\hat \mS_c=\mS_0, \hat \mL_c=\mL_0$.  
\end{theorem}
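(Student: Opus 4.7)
The plan is to derive Theorem \ref{thm:prec} as a direct corollary of Theorem \ref{thm:main}, by showing that the convex program inside \eqref{equ:p2} is itself an instance of \eqref{equ:p} to which Theorem \ref{thm:main} applies. First I would left-multiply the constraint $M_0 = HS_0 + L_0$ by $C$ to obtain $CM_0 = \tilde H S_0 + CL_0$, which exhibits the program in \eqref{equ:p2} as exactly \eqref{equ:p} with data $CM_0$, filter $\tilde H$, and ground-truth pair $(S_0, CL_0)$. It therefore suffices to verify the two hypotheses of Theorem \ref{thm:main} in this new setting.

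The key technical step will be to translate \eqref{equ:Hrip2} into the $S_0$-$\delta$-RINP of Definition \ref{def:rinp} for $\tilde H$. Using the thin SVD $H = U_H \Sigma_H V_H^\top$, the columns of $V_H$ form an orthonormal basis for the row space of $H$, so $V_H V_H^\top$ is the orthogonal projection onto $\ker(H)^\perp$, and hence $I - V_H V_H^\top = P_{\ker(H)}$. A direct computation then gives
\[
\tilde H^\top \tilde H \;=\; V_H U_H^\top U_H V_H^\top \;=\; V_H V_H^\top,
\]
so $I - \tilde H^\top \tilde H = P_{\ker(H)}$. Consequently, hypothesis \eqref{equ:Hrip2} is exactly the statement that $\tilde H$ satisfies the $S_0$-$\delta$-RINP with $\delta < 1/3$, while \eqref{equ:incoherence2} is already in the required form $\mu_{\tilde H}(S_0)\,\xi_{\tilde H}(CL_0) < (1-3\delta)/6$.

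At this point I would invoke Theorem \ref{thm:main} on the preconditioned instance to obtain some $\gamma > 0$ for which every optimizer of the program in \eqref{equ:p2} satisfies $\hat S_c = S_0$ and $\hat Y = CL_0$. The final identity $\hat L_c = M_0 - H\hat S_c = M_0 - HS_0 = L_0$ then follows immediately from the definition of $\hat L_c$ in \eqref{equ:p2}.

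The main obstacle, modest as it is, will be verifying the identity $I - \tilde H^\top \tilde H = P_{\ker(H)}$; once this is in hand the rest of the argument is essentially bookkeeping. One small point worth flagging is that $CL_0$ inherits the rank structure of $L_0$ (since $C$ is a fixed linear map), so no hidden rank inflation occurs when passing to the preconditioned problem, and any incoherence requirements on the low-rank factor are entirely absorbed into \eqref{equ:incoherence2} through $\xi_{\tilde H}(CL_0)$.
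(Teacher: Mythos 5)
Your proposal is correct and follows essentially the same route as the paper's proof: reduce \eqref{equ:p2} to an instance of \eqref{equ:p} with data $CM_0$, filter $\tilde H$, and ground truth $(S_0, CL_0)$, verify the $S_0$-$\delta$-RINP of $\tilde H$ via the identity $I-\tilde H^\top\tilde H=P_{\ker(H)}$, invoke Theorem~\ref{thm:main}, and recover $L_0$ from the constraint. The only cosmetic difference is that you establish the projection identity by computing $\tilde H^\top\tilde H=V_HV_H^\top$ directly, whereas the paper routes it through $\tilde H^\dagger=\tilde H^\top$ and the fact that $H$ and $\tilde H$ share a null space; both are equivalent one-line verifications.
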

\begin{proof}
As mentioned previously, $\mH=\mU_H\mathbf{\Sigma}_H\mV_H^\top$ is its SVD and $\tilde \mH =\mC\mH=\mU_H\mV_H^\top$. It is obvious that 
$\|\tilde \mH\|=1$ as all singular values of $\tilde \mH$ are 1. It is also easy to verify that $(\tilde \mH)^\dagger = (\tilde \mH)^\top$, where $(\tilde \mH)^\dagger$ is the pseudoinverse of $\tilde \mH$.

Note that $\mH$ and $\tilde \mH$ share the same null space, so the projection onto the null space of $\mH$ is
$\mP_{\ker(\mH)}= \mP_{\ker(\tilde \mH)}=\mI-\tilde \mH^\dagger \tilde \mH = \mI-\tilde \mH^\top\tilde \mH$, so \eqref{equ:Hrip2} is equivalent to $\tilde \mH$ satisfying $\mS_0$-$\delta$-RINP. We now apply Theorem \ref{thm:main} to the separation problem $\mC\mM_0=\tilde \mH\mS_0+\mC\mL_0$ given \eqref{equ:incoherence2} is satisfied. We conclude that we must have $\hat \mS_c=\mS_0, \hat \mY=\mC\mL_0$.  Finally, $\hat \mL_c=\mM_0-H\hat \mS_c=\mM_0-\mH\mS_0=\mL_0$.
\end{proof}

Note that the problems \eqref{equ:p} and \eqref{equ:p2} may not be equivalent. However, Theorem \ref{thm:prec} says under some incoherence condition, \eqref{equ:p2} will reach the desired solution, which is what we ultimately care about.

\begin{remark}
We want to emphasize that Theorem \ref{thm:prec} provides a theoretical guarantee about the convex problem \eqref{equ:p2}. It does not analyze the convergence of the preconditioned ADMM steps for solving \eqref{equ:p2}.
\end{remark}

\section{Iterative Steps for the Generalized Matrix Separation Problem: Application to Videos}\label{sec:tensor}

%To reiterate, we aim to solve the tensor optimization problem, 
%\begin{equation}\label{equ:pt}
%(\hat S, \hat L)=\argmin_{\Sc\in \R^{p_1 \times p_2 \times K}, \Lc\in\R^{m_1\times m_2 \times K}}\lambda\|\mat(\Sc)\|_1+\|\mat(\Lc)\|_*, \quad\text{subject to }L+HS=M_0.
%\end{equation}

In this section, we discuss the details of applying Algorithm \ref{alg:gms} when the given matrix $\mM_0$ originates from a 3D array, which we call  a tensor conveniently. Tensors, as higher-order arrays, are natural choices for analyzing high-dimensional structures in various applications. 

Let  $\boldsymbol{\mathcal{T}} \in \mathbb{F}^{m_1 \times m_2 \times K}$ be a three-dimensional tensor. We denote its $k$th slice along the third dimension by $\Tc[:,:,k]$. 
We define the \emph{matrix unfolding}\footnote{Note that this is different from the unfolding operator used in tensor product frameworks such as in \cite{kilmer2013third}.} of $\Tc$ along the third dimension as
\[
\mat(\boldsymbol{\mathcal{T}}) := \begin{bmatrix} \vecc(\boldsymbol{\mathcal{T}}[:,:,1]), \,  \vecc(\boldsymbol{\mathcal{T}}[:,:,2]), \,  \cdots, \,  \vecc(\boldsymbol{\mathcal{T}}[:,:,K]) \end{bmatrix} \in \mathbb{F}^{m_1 m_2 \times K}.
\]

In this notation, each column of $\mat(\boldsymbol{\mathcal{T}})$ corresponds to the vectorization of a slice of $\boldsymbol{\mathcal{T}}$. Thus, $\mat(\boldsymbol{\mathcal{T}})$ reshapes $\boldsymbol{\mathcal{T}}$ by stacking vectorized slices column-wise.  Here, we use capital letters to indicate the matrix version of tensors, such as $\mX=\mat(\Xc), \mL=\mat(\Lc)$, etc. We define $\ten(\cdot)$ to be the inverse of $\mat(\cdot)$, i.e.,
\[
\ten(\mat(\boldsymbol{\mathcal{T}})) = \boldsymbol{\mathcal{T}}.
\]

Given a known tensor $\Mc_0=\Lc_0 + \ten(\mH\mat(\Sc_0))\in\R^{m_1\times m_2 \times K}$, we aim to recover $\Sc_0\in\R^{p_1 \times p_2 \times K}$ and $\Lc_0\in\R^{m_1\times m_2 \times K}$ under the prior assumption that
 $\mL_0=\mat(\Lc_0)$ is a low rank matrix and $\mS_0=\mat(\Sc_0)$ is sparse. The filter $\mH\in\R^{m_1m_2\times p_1p_2}$ acts on the sparse tensor framewise. This setting is suitable for background removal in videos, where $\Mc_0$ is a video and each frame $\Mc_0[:,:,k]$ is a grayscale image.
The problem \eqref{equ:p} will take the form
\begin{equation} \label{eq:filtered_ten_sep}
(\hat \Sc, \hat \Lc) = \argmin_{\Sc,\Lc} 
 \lambda \| \mat(\Sc) \|_1 + \| \mat(\Lc) \|_*, \quad \text{s.t. }  \Lc + \ten(\mH\mat(\Sc)) = \Mc_0.
\end{equation}

We will detail the process of solving \eqref{eq:filtered_ten_sep} using \eqref{equ:L}--\eqref{equ:Y}. Step \eqref{equ:L} remains a singular value thresholding step, while Step \eqref{equ:S} corresponds to a tensor LASSO step, which is detailed in Section \ref{sec:tlasso}.

Even for a small video, the filter $\mH$ can be large and memory-intensive to store. In this section, we focus on the case where the linear operator $\mH\in\R^{m_1m_2\times p_1p_2}$ takes the specific form
\begin{equation}\label{equ:G2G1}
\mH = \mG_2 \otimes \mG_1,
\end{equation}
where $\mG_i \in \R^{m_i \times p_i}$, for $i = 1,2$. 
Such filters require less storage and allow faster application to data. Moreover, the setting \eqref{equ:G2G1} is common and practical: applying $\mG_2 \otimes \mG_1$ to an image $\mX$ corresponds to applying $\mG_1$ to the columns of $\mX$, followed by applying $\mG_2$ to the rows. For example, the 2D Fourier transform has the form given in \eqref{equ:G2G1}.

\begin{definition}\label{bilinear-trans}
\normalfont
For matrices $\mL \in \mathbb{F}^{p \times m_1}$ and $\mR \in \mathbb{F}^{m_2 \times q}$, we define the framewise transformation $\mathfrak{B}_{\mL, \mR}: \Fb^{m_1\times m_2\times K} \rightarrow  \Fb^{p\times q\times K}$ as
\[
\mathfrak{B}_{\mL, \mR}(\boldsymbol{\mathcal{T}})[:,:,k] = \mL\Tc[:,:,k]\mR, \quad \text{for } k = 1, 2, \dots, K,
%\mathfrak{B}_{L, R}(\mathcal{T}) := \begin{bmatrix} L \mathcal{T}[:,:,1] R, \, L \mathcal{T}[:,:,2] R, \, \cdots, \, L \mathcal{T}[:,:,k]R \end{bmatrix},
\]
 where each slice $\boldsymbol{\mathcal{T}}[:,:,k]$ is transformed by left-multiplying with $\mL$ and right-multiplying with $\mR$.
\end{definition}

The motivation for Definition \ref{bilinear-trans} is to provide an explicit expression for %$\mH\mat(\Xc)$: 
\begin{align}\label{equ:AB}
%\mH\mat(\Xc)  = 
(\mA \otimes \mB)\mat(\Xc) = \mat(\mathfrak{B}_{\mB, \mA^\top}(\Xc)).
\end{align}
The operator $\mathfrak{B}_{\mB, \mA^\top}(\Xc)$ can be easily implemented using the Matlab function \verb|pagetimes()|.
%more concise notation for the expression $\ten((G_2\otimes G_1)(\mat(\Xc)))$. We have
%\begin{equation}\label{equ:b}
%\ten(H(\mat(\Xc)))=\ten((G_2\otimes G_1)(\mat(\Xc)))=\mathfrak{B}_{G_1, G_2^\top}(\Xc),
%\end{equation}
%which is frequently used later in this section.

\begin{definition}\label{frame-div}
\normalfont
Let $\mY \in \mathbb{R}^{m_1 \times m_2}$ be a matrix and $\boldsymbol{\mathcal{X}} \in \mathbb{F}^{m_1 \times m_2 \times K}$ be a tensor. We define the slice-wise pointwise division  $\mathfrak{D}_\mY: \mathbb{F}^{m_1 \times m_2 \times K}  \to \mathbb{F}^{m_1 \times m_2 \times K}$ as
%The \emph{frame-wise  entry-wise division} of $\mathcal{X}$ by $Y$ is defined as:
\[
\mathfrak{D}_\mY(\boldsymbol{\mathcal{X}})[:,:,k] = \boldsymbol{\mathcal{X}}[:,:,k] \oslash \mY, \quad \text{for } k = 1, 2, \dots, K,
\]
where each slice $\boldsymbol{\mathcal{X}}[:,:,k]$ is pointwise divided by the matrix $\mY$.
\end{definition}

%The motivation for this structure comes from the fact that it lets transformations act separately on the rows and columns of a tensor, utilizing property \eqref{equ:vec}. This structure also has applications in machine learning, where multiple different filters are applied to a tensor. The following example illustrates this construction.
%We have 
%$$H\vecc(X) = \vecc(G_1XG_2^\top), \text{ for any }X\in\R^{p_1\times p_2}$$ so $H$ acts on each image by acting on the rows and columns.

%
%Step \eqref{equ:S} is a still a LASSO problem, which we will detail  below.

\subsection{Tensor LASSO Problem}\label{sec:tlasso}
We consider the following optimization problem:
\begin{equation}\label{equ:lasso-3d}
\hat \Xc = \argmin_{\Xc} \left(\lambda \| \mat(\Xc) \|_1 + \frac{1}{2} \| \mH \mat(\Xc) - \mat(\boldsymbol{\mathcal{B}}) \|_F^2  \right),
\end{equation} which, equivalently in matrix notation, can be written as
\begin{equation}\label{eq:lasso_St}
\hat \mX = \argmin_\mX \left( \lambda \|\mX\|_1 + \frac{1}{2} \|\mH\mX - \mB\|_F^2 \right),
\end{equation}
where $\mH=\mG_2 \otimes \mG_1$ is the coefficient matrix.

Once again, we have placed a basic introduction of LASSO in the Appendix.
If we use FISTA to solve \eqref{equ:lasso-3d}, the Lipschitz constant is   $L=\|\mH\|^2=\|\mG_2 \otimes \mG_1\|^2=\|\mG_1\|^2\|\mG_2\|^2$. The simplicity of FISTA makes the transition from regular LASSO to tensor LASSO straightforward. The only tweak is that we utilize the notation $\mathfrak{B}_{\mL, \mR}$ in the $\Xc$ update.
%The only difference is that in the $\Xc$ update, we  compute
%$$\ten(\mH^\top(\mH\mat(\Yc)-\mat(\Bc)))=\mathfrak{B}_{\mG_1, \mG_2^\top}\left(\mathfrak{B}_{\mG_1^T, \mG_2}(\Yc)-\Bc\right).$$
See Algorithm \ref{alg:lasso3dfista} for the full steps.
\begin{algorithm}[htb]
\caption{LASSO($\mG_1, \mG_2, \Bc, \lambda; T, \epsilon$): FISTA}\label{alg:lasso3dfista}
\begin{algorithmic}
\State\textbf{Input:} $\mG_1 \in \R^{p_1 \times m_1}, \mG_2 \in \R^{p_2 \times m_2}, \Bc\in\R^{p_1 \times p_2\times K}, \lambda >0, \rho>0$, maximum number of iterations $T$ and the tolerance $\epsilon$.
\State\textbf{Output:} $\Xc^{t}\in\R^{m_1 \times m_2\times K}$: an approximation of solution of \eqref{equ:lasso-3d} at last iteration $t$
\State $L=\|\mG_1\|^2\|\mG_2\|^2$.
\State\textbf{Initialize:} $\Xc^0, \Yc^0 \in \R^{m_1\times m_2 \times K}$ 

\For{$k=0,1,\cdots,T-1$} 
\State 1: $\Xc^{k+1} = S_{\lambda/L}\!\left(\Yc^k-\tfrac{1}{L}\mathfrak{B}_{\mG_1, \mG_2^\top}\left(\mathfrak{B}_{\mG_1^T, \mG_2}(\Yc^k)-\Bc\right)\right)$ 
 \State 2: $t^{k+1} = \frac{1 + \sqrt{1 + 4(t^k)^2}}{2}$
 \State 3: $\Yc^{k+1} = \Xc^{k} + \frac{t^k - 1}{t^{k+1}}(\Yc^{k+1} - \Xc^k)$
\State Terminate if $\|\Xc^{k+1}-\Xc^k\|_F/\|\Xc^k\|_F <\epsilon$ 
\EndFor
\end{algorithmic}
\end{algorithm}

If we use ADMM to solve \eqref{equ:lasso-3d}, the three steps~\eqref{equ:l1}-\eqref{equ:l3} become:
\begin{align}\label{equ:L1}
&\mX^{k+1} = (\mH^\top \mH + \rho \mI)^{-1}(\mH^\top \mB + \rho (\mZ^k - \mU^k))\\
&\mZ^{k+1} = S_{\lambda/\rho}(\mX^{k+1} + \mU^k)\\
&\mU^{k+1} = \mU^k + \mX^{k+1} - \mZ^{k+1}.
\end{align}

Solving \eqref{equ:L1} is to solve the linear system:
\begin{align}\label{lasso:3d-step1}
(\mH^\top \mH + \rho \mI)\mX = \mH^\top \mB + \rho(\mZ^k - \mU^k).
\end{align}
Let $\boldsymbol{\mathcal{R}}$ denote the tensor corresponding to the right-hand side of \eqref{lasso:3d-step1}, given by
\[
\boldsymbol{\mathcal{R}}= \ten\left(\mH^\top \mB + \rho(\mZ^k - \mU^k)\right)\stackrel{\eqref{equ:AB}}{=} \mathfrak{B}_{\mG_1^T, \mG_2}(\Bc)+ \rho(\boldsymbol{\mathcal{Z}}^k - \boldsymbol{\mathcal{U}}^k).
\]

To solve \eqref{lasso:3d-step1}, we precompute an SVD of $\mH^\top \mH+\rho$, which reduces to computing an SVD of $\mG_i^\top\mG_i$. Specifically, let $\mG_i^\top \mG_i = \mV_i \mathbf{\Sigma}_i \mV_i^\top$ for $i=1,2$. Then, the  SVD of $\mH^\top \mH$ is given by
\begin{align}
\mH^\top \mH &= \mG_2^\top \mG_2 \otimes \mG_1^\top \mG_1 = (\mV_2 \mathbf{\Sigma}_2 \mV_2^\top) \otimes (\mV_1 \mathbf{\Sigma}_1 \mV_1^\top) = (\mV_2 \otimes \mV_1)(\mathbf{\Sigma}_2 \otimes \mathbf{\Sigma}_1)(\mV_2 \otimes \mV_1)^\top. 
\end{align}
So, \eqref{lasso:3d-step1} becomes
$$(\mV_2 \otimes \mV_1)(\mathbf{\Sigma}_2 \otimes \mathbf{\Sigma}_1 + \rho)(\mV_2 \otimes \mV_1)^\top \mX=\mR, $$ and its solution is
\begin{align}\label{equ:Xt1}
\mX = (\mV_2 \otimes \mV_1)(\mathbf{\Sigma}_2 \otimes \mathbf{\Sigma}_1 + \rho)^{-1}(\mV_2 \otimes \mV_1)^\top \mR=(\mV_2 \otimes \mV_1)(\mathbf{\Sigma}_2 \otimes \mathbf{\Sigma}_1 + \rho)^{-1}(\mat(\mathfrak{B}_{\mV_1^\top, \mV_2} ( \Rc ))).
\end{align}
Since $(\mathbf{\Sigma}_2 \otimes \mathbf{\Sigma}_1 + \rho)^{-1}$ is a diagonal matrix, utilizing Definition \ref{frame-div},
\begin{align*}
&\ten[(\mathbf{\Sigma}_2 \otimes \mathbf{\Sigma}_1 + \rho)^{-1}(\mat(\mathfrak{B}_{\mV_1^\top, \mV_2} ( \Rc )))]=\mathfrak{D}_{\im(\diag(\mathbf{\Sigma}_2 \otimes \mathbf{\Sigma}_1 + \rho))}\left(\mathfrak{B}_{\mV_1^\top, \mV_2} ( \Rc )\right)\\&\stackrel{Lemma\ \ref{lem}}{=} \mathfrak{D}_{\mathbf{\Sigma} + \rho}(\mathfrak{B}_{\mV_1^\top, \mV_2} ( \boldsymbol{\mathcal{R}} )),
\end{align*}
where $\mathbf{\Sigma} = \diag(\mathbf{\Sigma}_1)[\diag(\mathbf{\Sigma}_2)]^\top $.
%so 
%\begin{align*}
%\im((\mathbf{\Sigma}_2 \otimes \mathbf{\Sigma}_1 + \rho)^{-1}C[:,i])=&\im(C[:,i])\oslash \im(\diag(\mathbf{\Sigma}_2 \otimes \mathbf{\Sigma}_1 + \rho))\\
%\stackrel{Lemma\ \ref{lem}}{=}&\im(C[:,i])\oslash (\diag(\mathbf{\Sigma}_1)[\diag(\mathbf{\Sigma}_2)]^\top+\rho).
%\end{align*}
Therefore the solution to \eqref{lasso:3d-step1} is expressed as
\begin{align}\label{svd:d}
%X = V_1\left( (V_1^\top R V_2)  \oslash (\mathbf{\Sigma}^2 + \rho)\right)V_2^\top. \\
\mathcal{X} = \mathfrak{B}_{\mV_1, \mV_2^\top} \left( \mathfrak{D}_{\mathbf{\Sigma} + \rho}(\mathfrak{B}_{\mV_1^\top, \mV_2} ( \boldsymbol{\mathcal{R}} )) \right ).
\end{align}

Algorithm \ref{alg:lasso3d} summarizes the above steps.
%The following algorithm outlines these steps for solving the tensor LASSO problem.

%\textcolor{red}{Use Lemma \ref{lem}. $\mathbf{\Sigma}=?$ below}

\begin{algorithm}[htb]
\caption{LASSO($\mG_1, \mG_2, \Bc, \lambda; \rho, T, \epsilon$): ADMM}\label{alg:lasso3d}
\begin{algorithmic}
\State\textbf{Input:} $\mG_1 \in \R^{p_1 \times m_1}, \mG_2 \in \R^{p_2 \times m_2}, \Bc\in\R^{p_1 \times p_2\times K}, \lambda >0, \rho>0$, maximum number of iterations $T$ and the tolerance $\epsilon$.
\State\textbf{Output:} $\Xc^{t}\in\R^{m_1 \times m_2\times K}$: an approximation of solution of \eqref{equ:lasso-3d} at last iteration $t$
\State\textbf{Initialize:} $\Zc^0, \Uc^0 \in \R^{m_1\times m_2 \times K}$ 
\State $(\mV_i, \mathbf{\Sigma}_i, \sim) = \svd(\mG_i^\top \mG_i), i = 1, 2$
%\State $G_1^\top G_1 = V_1 \mathbf{\Sigma}_1 V_1^\top$, $G_2^\top G_2 = V_2 \mathbf{\Sigma}_2 V_2^\top$
\State $\mathbf{\Sigma} = \diag(\mathbf{\Sigma}_1)[\diag(\mathbf{\Sigma}_2)]^\top$
\State $\Tc = \mathfrak{B}_{\mG_1^\top, \mG_2}(\Bc)$
\For{$k=0,1,\cdots,T-1$} 
\State 1: $\Rc = \Tc + \rho (\Zc^{k} -\Uc^k)$ 
\State 2: $\Xc^{k+1} = \mathfrak{B}_{\mV_1, \mV_2^\top} \left( \mathfrak{D}_{\mathbf{\Sigma} + \rho}(\mathfrak{B}_{\mV_1^\top, \mV_2} ( \Rc )) \right )$ 
\State 3: $\Zc^{k+1} = S_{\lambda/\rho}(\Xc^{k+1} + \Uc^k)$
\State 4: $\Uc^{k+1} =\Uc^k + \Xc^{k+1} - \Zc^{k+1}$
\State Terminate if $\|\Xc^{k+1}-\Xc^k\|_F/\|\Xc^k\|_F <\epsilon$ 
\EndFor
\end{algorithmic}
\end{algorithm}

\subsubsection*{$G_1$ and $G_2$ are block structured}  
%This section focuses on the case where $G_i$'s are block structured
%This section is analogous to Section \ref{sec:1circ}, but more general, where the $\mG_i$'s are block circulant (instead of circulant themselves) or block structured.  Specifically, 
This special case is used in Section \ref{sec:vid}. Let $\mE_i \in \mathbb{R}^{n_i \times n_i}$, where $n_i$ divides $m_i$, and define the matrices:
\begin{align}\label{equ:Gi}
\mG_i := \mI_{\frac{m_i}{n_i}}\otimes \mE_i\in\R^{m_i\times m_i},\quad i = 1, 2.
\end{align}

Let $k_i=m_i / n_i$ for convenience. For an image $\mX\in\R^{m_1\times m_2}$, we partition it into $k_1\times k_2$ blocks as $\mX=[\mX_{ij}]$, where each block $\mX_{ij}\in\R^{n_1\times n_2}$. Then the effect of $\mH$ on $\mX$ is highly localized as
\begin{align}\notag
\mG_1\mX\mG_2^\top&=
\begin{bmatrix}
\mE_1&&&\\
&\mE_1&&\\
&&\ddots&\\
&&&\mE_1
\end{bmatrix}
\begin{bmatrix}
\mX_{11}&\mX_{12}&\cdots&\mX_{1,k_2}\\
\mX_{21}&\mX_{22}&&\mX_{2,k_2}\\
\vdots&\vdots&\ddots&\vdots\\
\mX_{k_1,1}&\mX_{k_1,2}&&\mX_{k_1k_2}
\end{bmatrix}\begin{bmatrix}
\mE_2^\top&&&\\
&\mE_2^\top&&\\
&&\ddots&\\
&&&\mE_2^\top
\end{bmatrix}\\
&=\begin{bmatrix}
\mE_1\mX_{11}\mE_2^\top&\mE_1\mX_{12}\mE_2^\top&\cdots&\mE_1\mX_{1,k_2}\mE_2^\top\\
\mE_1\mX_{21}\mE_2^\top&\mE_1\mX_{22}\mE_2^\top&&\mE_1\mX_{2,k_2}\mE_2^\top\\
\dots&\vdots&\ddots&\vdots\\\label{equ:g1xg2t}
\mE_1\mX_{k_1,1}\mE_2^\top&\mE_1\mX_{k_1,2}\mE_2^\top&&\mE_1\mX_{k_1k_2}\mE_2^\top
\end{bmatrix}.
\end{align}

The matrices $\mE_i$ can be arbitrary. An interesting example is when each entry is positive with row sums to be 1. 

\begin{example} 
Let $\mE_1, \mE_2$ both have positive entries, with each row summing to 1. Then each pixel of $\mE_1\mX_{ij}\mE_2^\top$ is a weighted average of pixel values in $\mX_{ij}$. A very simple example is
\begin{align*}
\mE_1\mX_{ij}\mE_2^\top &=
\begin{bmatrix}0.6&0.4\\0.3&0.7\end{bmatrix}
\begin{bmatrix}a&b\\c&d\end{bmatrix}
\begin{bmatrix}0.1&0.2\\0.9&0.8\end{bmatrix} \\
&=
\begin{bmatrix}
0.1(0.6a+0.4c)+0.9(0.6b+0.4d) & 0.2(0.6a+0.4c)+0.8(0.6b+0.4d)\\
0.1(0.3a+0.7c)+0.9(0.3b+0.7d) & 0.2(0.3a+0.7c)+0.8(0.3b+0.7d)
\end{bmatrix}.
\end{align*}\qed
\end{example}

To solve this case, we apply Algorithm~\ref{alg:lasso3d}. The matrices 
$\boldsymbol{\Sigma}, \mV_1$, and $\mV_2$ are computed as follows:
\begin{align*}
(\mW_i, \mS{e_i}, \sim) &= \svd(\mE_i^\top \mE_i),\quad i = 1, 2,\\
\mathbf{\Sigma} &= \diag(\mI_{k_1}\otimes \mS{e_1})[\diag(\mI_{k_2}\otimes \mS{e_2})]^\top,\\
\mV_i &=\mI_{k_i}\otimes \mW_i, \quad i = 1, 2.
\end{align*}

\subsection{Solution to the Tensor Separation Problem }
 
We now return to the solution of steps \eqref{equ:L}-\eqref{equ:Y}. The solution to \eqref{equ:L} remains a singular value thresholding step, in which the operator $\mat(\cdot)$ is applied to reshape the tensor into a matrix. The solution to \eqref{equ:S} corresponds to a LASSO step, which is detailed in Section \ref{sec:tlasso}. See Algorithm \ref{alg:gts}.

\begin{algorithm}[H]
\caption{Generalized Tensor Separation: GTS$(\Mc_0, \mG_1, \mG_2, \lambda; \rho_O, T_O, T_I, \epsilon_O, \epsilon_I)$}\label{alg:gts}
\begin{algorithmic}
\State\textbf{Input:} $\Mc_0 \in\R^{m_1\times m_2 \times K}, \mG_1 \in \R^{p_1 \times m_1}, \mG_2 \in \R^{p_2 \times m_2},  \lambda >0, \rho_O>0$, maximum number of iterations $T_O, T_I$ and the tolerances $\epsilon_O, \epsilon_I$.
\State\textbf{Output:} $ \Sc^{t} \in \R^{p_1 \times p_2 \times K}$, $\Lc^{t}\in\R^{m_1\times m_2 \times K}$: an approximation of solution of \eqref{eq:filtered_ten_sep} at last iteration $t$
\State\textbf{Initialize:} $\Lc^0, \Uc^0\in\R^{m_1\times m_2 \times K}, \Sc^{0} \in \R^{p_1 \times p_2 \times K}$
\For{$k=0,1,\cdots,T_O-1$} 
\State 1: $ \Lc^{k+1} = \ten \left(D_{\rho^{-1}}(-\mat(\mathfrak{B}_{\mG_1, \mG_2^\top}(\Sc^k))         +\mat(\Mc_0)-\mat(\Uc^k))\right) $
\State 2: $\Sc^{k+1} = \text{LASSO}(\mG_1, \mG_2, \Mc_0-\Uc^k-\Lc^{k+1}, \lambda/\rho_O; T_I, \epsilon_I)$ for Algorithm \ref{alg:lasso3dfista} or
\State \hspace{0.11in} $\Sc^{k+1} = \text{LASSO}(\mG_1, \mG_2, \Mc_0-\Uc^k-\Lc^{k+1}, \lambda/\rho_O; \rho_I, T_I, \epsilon_I) $ for  Algorithm \ref{alg:lasso3d}
\State 3: $\Uc^{k+1} = \Uc^k + \Lc^{k+1} + \mathfrak{B}_{\mG_1, \mG_2^\top}(\Sc^{k+1})-\Mc_0 $
\State Terminate if $\frac{\| (\Sc^{k+1}, \Lc^{k+1}) - (\Sc^k, \Lc^k) \|_F}{\|(\Sc^k, \Lc^k)\|_F + 1} <\epsilon_O$ 
\EndFor
\end{algorithmic}
\end{algorithm}

%\begin{remark}
%Before providing input to the algorithm, we check if $G_1$ and $G_2$ are block circulant. If not, we use Algorithm \ref{alg:lasso3d} in Step 2. Otherwise, since $G_1$ and $G_2$ consist of blocks of $E_1$ and $E_2$, we input these into Algorithm \ref{alg:lasso3d-cir} in Step 2. \textcolor{red}{reword}
%\end{remark}

\subsubsection{Preconditioning}
The preconditioning of $\mH = \mG_2\otimes \mG_1$ reduces to the separate preconditioning of $\mG_1$ and $\mG_2$.

Let $\mG_i = \mU_i \mathbf{\Sigma}_i \mV_i^\top$ be its SVD, define $\mC_i=\mU_i \mathbf{\Sigma}_i^{-1} \mU_i^\top$, and $\mC=\mC_2 \otimes \mC_1$. Then the new filter is
$$\tilde \mH=\mC\mH=(\mC_2\mG_2) \otimes (\mC_1\mG_1).$$

Similar to Section \ref{sec:prec}, we apply  $\mC_2 \otimes \mC_1$ on the left to the original constraint
$$\mat(\Mc_0)=(\mG_2\otimes \mG_1)\mat(\Sc_0)+\mat(\Lc_0)$$
which becomes
\begin{equation}\label{equ:prec2d}
(\mC_2 \otimes \mC_1) \mat(\Mc_0) = \tilde \mH \mat(\Sc_0) + (\mC_2 \otimes \mC_1) \mat(\Lc_0).
\end{equation}
We first solve for $\Sc_0$ from \eqref{equ:prec2d}, which is performed by Algorithm \ref{alg:gts_c}.

\begin{algorithm}[H]
\caption{ Preconditioned Generalized Tensor Separation: PGTS$(\Mc_0, \mG_1, \mG_2, \lambda; \rho_O, T_O, T_I, \epsilon_O, \epsilon_I)$}\label{alg:gts_c}
\begin{algorithmic}
\State\textbf{Input:} $\Mc_0 \in\R^{m_1\times m_2 \times K}, \mG_1 \in \R^{p_1 \times m_1}, \mG_2 \in \R^{p_2 \times m_2},  \lambda >0, \rho_O>0$, maximum number of iterations $T_O, T_I$ and the tolerance $\epsilon_O, \epsilon_I$.
\State\textbf{Output:} $ \Sc^{t} \in \R^{p_1 \times p_2 \times K}$, $\Lc^{t}\in\R^{m_1\times m_2 \times K}$: an approximation of solution of \eqref{eq:filtered_ten_sep} at last iteration $t$
\State $(\mU_i, \mathbf{\Sigma}_i, \mV_i) = \svd(\mG_i)$
\State $\tilde \mG_i = \mU_i\mV_i^\top$
\State $\mC_i = \mU_i\mathbf{\Sigma}_i^{-1}\mU_i^\top$
\State $(\Sc^t, \boldsymbol{\mathcal{Y}}) = \text{GTS}(\mathfrak{B}_{\mC_1, \mC_2^\top}(\Mc_0), \tilde \mG_1, \tilde \mG_2, \lambda; \rho_O, T_O, T_I, \epsilon_O, \epsilon_I)$ \Comment{Algorithm \ref{alg:gts}}
\State $\Lc^t = \Mc_0 - \mathfrak{B}_{\mG_1, \mG_2^\top}(\Sc^t)$
\end{algorithmic}
\end{algorithm}

\section{Numerical Experiments} \label{sec:numex}

We conduct various numerical experiments in this section using both synthetic and real data. For synthetic data, the ground truth matrices $\mL_0$ and $\mS_0$ are randomly generated according to the models in Table \ref{tab:2d}, where the sparse matrix $\mS_0$ is generated using three different choices of random models, where $\rho_r$ and $\rho_s$ indicates low rank ratio and sparsity ratio respectively.

\begin{table}[htbp] % not [H]
\caption{Generation of ground truth matrices $\mL_0$ and $\mS_0$}
\label{tab:2d}
\renewcommand{\arraystretch}{1.3}
\begin{tabularx}{\textwidth}{p{2.5cm} X}
\hline
\textbf{Matrix} & \textbf{Generation Method} \\
\hline
$\mL_0 \in \mathbb{R}^{m \times n}$ 
& Choose rank ratio $\rho_r$. Let $r = \lfloor \rho_r \cdot \min(m,n) \rfloor$.
Generate $\mU \in \mathbb{R}^{m \times r}$ and $\mV \in \mathbb{R}^{n \times r}$ with i.i.d.\ $\mathcal{N}(0,1)$ entries. 
Set $\mL_0 = \mU \mV^\top$. \\[4pt]

$\mS_0 \in \mathbb{R}^{p \times n}$ 
& Choose sparsity ratio $\rho_s$.
Randomly select $\lfloor \rho_s \cdot p n \rfloor$ entries to be nonzero, uniformly over the matrix.
Assign values using one of the following distributions:
Gaussian $\mathcal{N}(0,1)$; 
Uniform $\text{Unif}(-a, a)$ with $a = \|\mL_0\|_\infty$;
Impulsive $a \cdot \text{sign}(\mathcal{N}(0,1))$ with $a = \|\mL_0\|_\infty$. \\[2pt]
\hline
\end{tabularx}
\end{table}

To assess recovery accuracy, we use the relative Frobenius norm error defined as

\begin{equation}
\text{RelErr}(\mL_0, \hat \mL) = \frac{\|\hat \mL - \mL_0\|_F}{\|\mL_0\|_F}, \quad
\text{RelErr}(\mS_0, \hat \mS) = \frac{\|\hat \mS - \mS_0\|_F}{\|\mS_0\|_F},
\end{equation}
where $\hat \mL$ and $\hat \mS$ denote the recovered low-rank and sparse components, respectively.

We consistently use $\lambda=\frac{1}{\sqrt{\max\{m, n\}}}$ as the penalization parameter, following the recommendation in \cite{CLMW11}. Other parameters used in our algorithms are explained in Section \ref{sec:iter1d} and \ref{sec:tensor}. For completeness, we list these parameters again below, along with recommended values:

\begin{itemize}
\item $\rho_O$: step-size parameter for ADMM (outer loop). Choosing $\rho_O \in \{0.5, 1\}$ is recommended by \cite{BS11, GE14} and validated by our experiments.
\item $\rho_I$: step-size parameter for LASSO (inner loop) if solved by ADMM. We also recommend choosing $\rho_I \in \{0.5, 1\}$.
\item $T_O$: maximum number of iterations for ADMM.
\item $T_I$: maximum number of iterations for LASSO. We recommend choosing a relatively small value, typically on the order of tens.
\item $\epsilon_O$: convergence tolerance for ADMM. The value depends on users' accuracy needs.
\item $\epsilon_I$: convergence tolerance for LASSO. The value depends on users' accuracy needs.
\end{itemize}
Experiments are run on a MacBook Pro with Apple M3 chip and 8GB RAM, using Matlab 2023b. The code can be found in the github repository \url{https://github.com/xuemeic/matrix_sep}.

\subsection{Preconditioning vs.\ No Preconditioning}\label{sec:pvn}

For the first experiment, we generated a low-rank matrix $\mL_0\in\R^{100\times 100}$ modeled according to Table \ref{tab:2d} with rank $r=5$ ($\rho_r=5\%$), and a sparse component $\mS_0\in\R^{100\times 100}$ with $\rho_s=10\%$ nonzero entries, where each entry is drawn from the standard normal distribution.
The filter matrix $\mH$ was generated randomly, with entries i.i.d.\ from the standard normal distribution. 
We compared the performance of Algorithm \ref{alg:gms} and the preconditioned Algorithm \ref{alg:gms_c} using the same parameters: $\rho_O=\rho_I=1, T_O=60$, and $ T_I = 60$. The LASSO step is solved by ADMM.
 As shown in Figure~\ref{fig:prec_convergence}, the preconditioned version converges significantly faster.
More importantly, the preconditioned Algorithm \ref{alg:gms_c} is able to achieve accuracy on the order of $10^{-10}$, whereas Algorithm \ref{alg:gms} stalls early in the iteration. This demonstrates the effectiveness of preconditioning.
 
\begin{figure}[htb]
\centering
\includegraphics[width=0.9\textwidth]{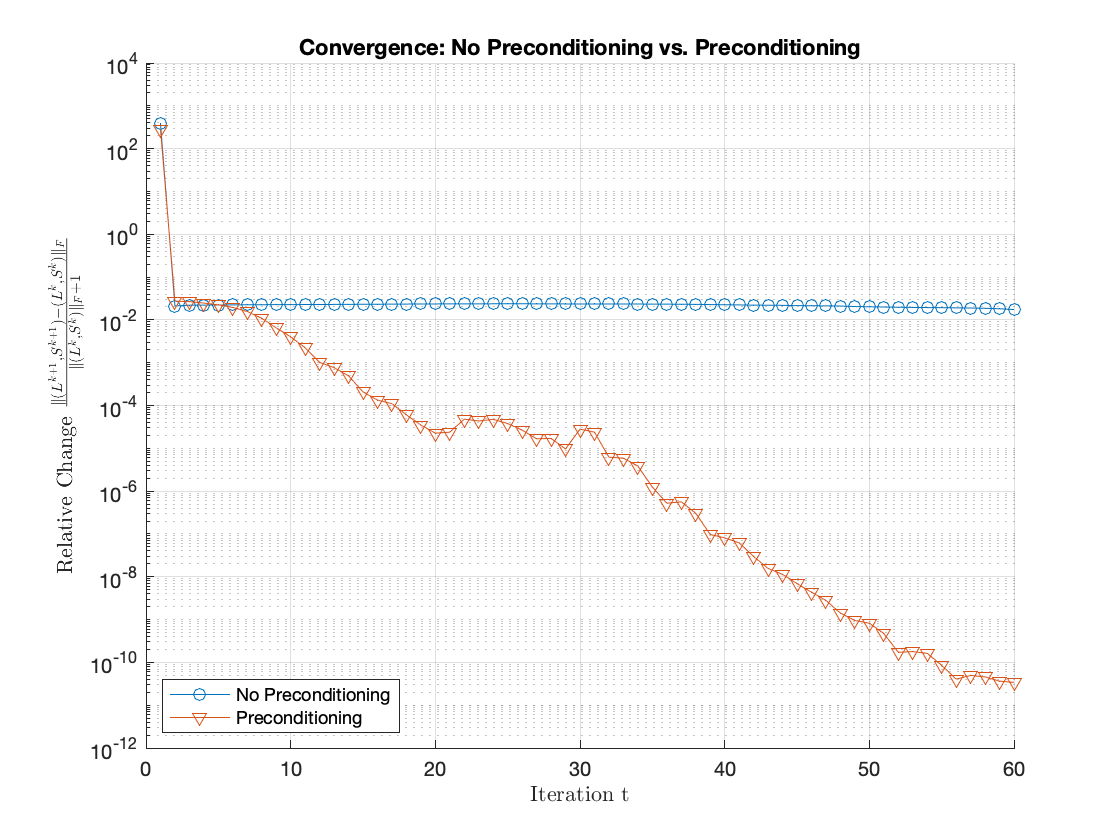}
\caption{Convergence of Algorithm~\ref{alg:gms} (no preconditioning) vs. Algorithm \ref{alg:gms_c} (with preconditioning).}
\label{fig:prec_convergence}
\end{figure}

In a second experiment, where $\mL_0, \mS_0$, and $\mH$ are modeled the same as above but with $n = 300, m = 270, p = 266$, and  $\rho_r= \rho_s=5\%$, we select $\rho_O=\rho_I=1, \epsilon_O = 10^{-7}, \epsilon_I=10^{-5}, T_O=500$, and  $T_I = 30$. The comparative performance results are shown in Table \ref{tab:con2}.
\begin{table}[htb]
\caption{$\mH\in\R^{270\times266}$ is i.i.d $N(0, 1)$}\label{tab:con2}
\begin{tabular}{rllllll}
&RelErr of $S_0$&RelErr of $L_0$ & Iterations ran & Time\\\hline
Algorithm \ref{alg:gms}: No preconditioning \\
LASSO by FISTA&\verb"3.93e+14"&\verb"7.89e+16"&500 (max) &44.45 sec\\
LASSO by ADMM & \verb"3.13e-03" & \verb"8.63e-01" & 500 (max) &34.81 sec\\

Algorithm \ref{alg:gms_c}: \textbf{With preconditioning} \\
\textbf{LASSO by FISTA} &\verb"4.58e-07" & \verb"1.92e-04"& 100&2.52 sec\\
LASSO by ADMM &\verb"8.27e-07" & \verb"3.37e-04"& 100&10.63 sec\\
\hline
\end{tabular}
\end{table}

We observe in Table \ref{tab:con2} that preconditioning is crucial to obtain successful recovery, whether LASSO was solved by FISTA or ADMM. Moreover, the best result, in terms of both accuracy and efficiency, is produced when LASSO was solved by FISTA with preconditioning. Without preconditioning, it is surprising that solving LASSO by FISTA did not converge to the true solution while solving LASSO by ADMM generated satisfying results, suggesting the higher sensitivity to conditioning when using FISTA. This can be partially seen in the convergence rate established in \cite{BATM2009}.

In a third experiment, $\mL_0$ and $\mS_0$ follow the same random models as above with parameters $n = 300, 
m = n - 1, p = m$, and  $\rho_r= \rho_s=5\%$. We use a deterministic circulant matrix $\mH$ defined in \eqref{equ:H}, and set $\rho_O=1, \rho_I=1, \epsilon_O = 10^{-7}, \epsilon_I=10^{-5}, T_O=500$, and $T_I = 30$:
\begin{equation}\label{equ:H}
\mH =\begin{bmatrix}
-1&1&&&\\
&-1&1&&\\
&&-1&\ddots&\\
&&&\ddots&1\\
1&&&&-1
\end{bmatrix}.
\end{equation}
The results are summarized in Table \ref{tab:6.1.3}. Similar to the second experiment, preconditioning benefited both  methods, and the best performance is obtained by preconditioning with LASSO solved by FISTA. However, with this circulant $\mH$, FISTA outperforms ADMM when there is no preconditioning.
\begin{table}[htb]
\caption{$H\in\R^{300\times 300}$ is circulant as in \eqref{equ:H}}\label{tab:6.1.3}
\begin{tabular}{rllllll}
&RelErr of $S_0$&RelErr of $L_0$ & Iterations ran & Time\\\hline
Algorithm \ref{alg:gms}: No preconditioning \\
LASSO by FISTA&\verb"8.94e-07"&\verb"6.38e-06"&85&8.85 sec\\
LASSO by ADMM & \verb"2.47e-03" & \verb"1.15e-02" & 500 (max) &23.82 sec\\

Algorithm \ref{alg:gms_c}: \textbf{With preconditioning} \\
\textbf{LASSO by FISTA} &\verb"2.18e-08" & \verb"8.30e-07"& 28&1.28 sec\\
LASSO by ADMM &\verb"2.14e-06" & \verb"2.38e-05"& 28&1.74 sec\\
\hline
\end{tabular}
\end{table}

\begin{remark}
When LASSO is solved by ADMM, we can utilize the circulant structure of $\mH$ when solving \eqref{equ:l1} as the coefficient matrix can be diagonalized by the Fourier transform. This will speed up Algorithm \ref{alg:lasso2}. However, with preconditioning, the new filter $\tilde \mH$ is in general no longer circulant even if $\mH$ was. Our numerical experiment indicates \textbf{the benefit of preconditioning outweighs the benefit of circulant structure}, so Algorithm \ref{alg:gms_c} should still be utilized even when $\mH$ is circulant.
\end{remark}

The fourth experiment explores the effect of $\rho_O$ and $\rho_I$ on the convergence of Algorithm \ref{alg:gms} and Algorithm \ref{alg:gms_c} while LASSO is solved by ADMM. We set $n = 100, m = n - 1$, and $p = m$. The matrices $\mL_0$ and $\mS_0$ are generated according to Table \ref{tab:2d}, with $\mS_0$ following the Gaussian model. Let $\mH$ be as defined in \eqref{equ:H}, and set $\rho_r= \rho_s=5\%, \epsilon_O = 10^{-7}, \epsilon_I=10^{-5}, T_O=100$, and $T_I = 20$. We vary $\rho_O$ over the list [0.01, 0.05, 0.1, 1, 5, 10, 50, 100, 500] and $\rho_I$ over  [0.05, 0.1, 1, 5, 10, 50, 100] for both algorithms. Figure \ref{fig:rho} shows the recovery results. For Algorithm \ref{alg:gms}, only a limited range of these parameters allows successful recovery, whereas the preconditioned Algorithm \ref{alg:gms_c} achieves successful recovery across a much wider range of parameter choices.

\begin{figure}[htb]
\includegraphics[width = \textwidth]{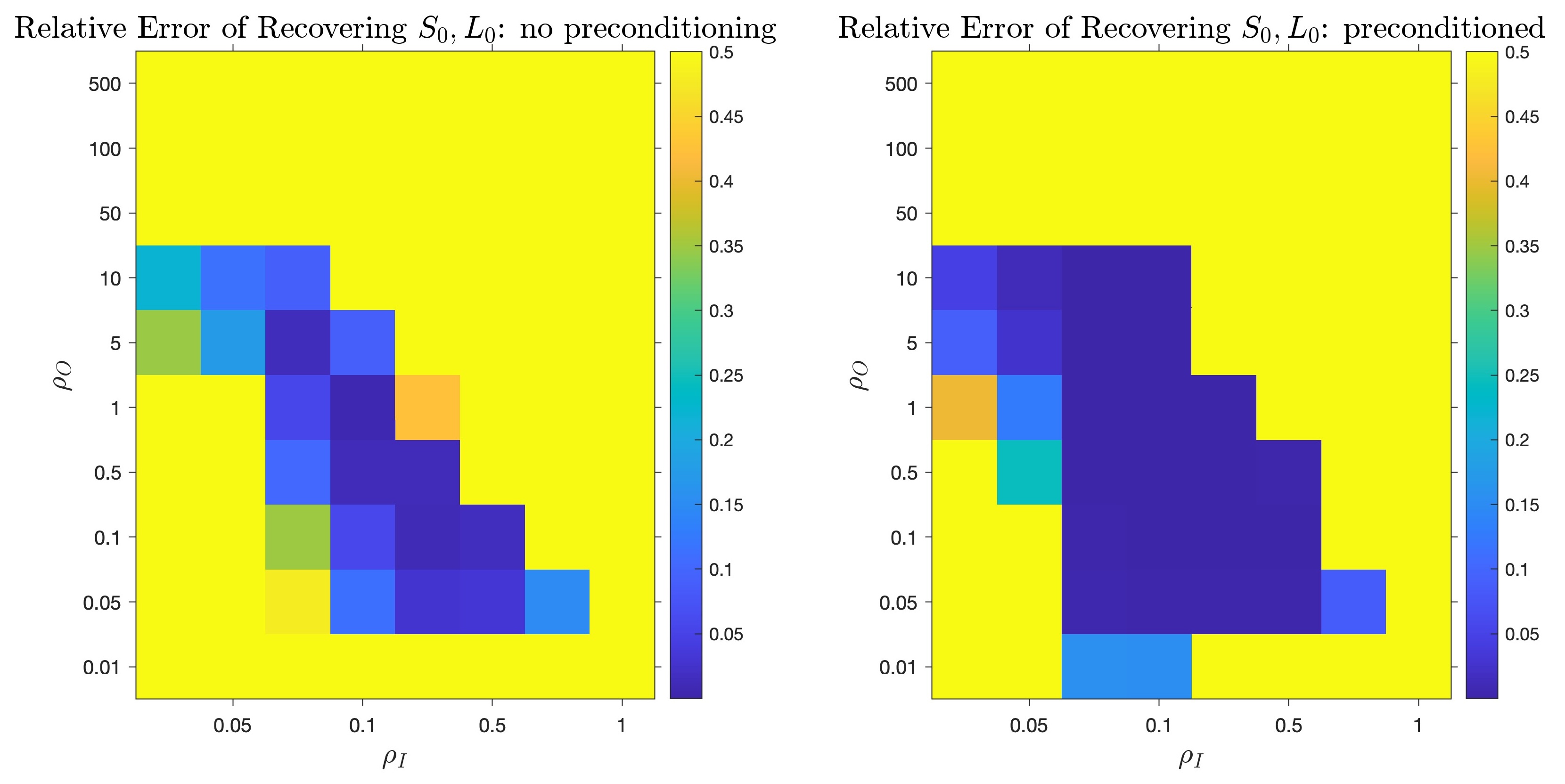}
\caption{Preconditioning (right figure) allows more robust and wider range of choices of $\rho_O, \rho_I$.}
\label{fig:rho}
\end{figure}

\textbf{Summary:} These four experiments demonstrate the superior performance of the preconditioning technique in terms of efficiency, accuracy, and robustness. We will employ this preconditioning approach in all subsequent experiments. Moreover, when preconditioned, using FISTA for LASSO consistently outperforms ADMM for LASSO. Therefore we recommend Algorithm~\ref{alg:gms_c} (combined with Algorithm \ref{alg:fista} FISTA)  for the general matrix separation problem and Algorithm~\ref{alg:gts_c} (combined with Algorithm \ref{alg:lasso3dfista} FISTA) for the specialized tensor case.

\subsection{Explorations on rank and sparsity tolerance}\label{sec:rs}
In this section, we explore the maximum rank ratio and sparsity ratio that allow successful recovery of $\mL_0$ and $\mS_0$ using Algorithm \ref{alg:gms_c} while LASSO is solved by ADMM. We set $m=n=p=100$. The following parameter values are held constant throughout all experiments and trials, as preliminary tests demonstrated their effectiveness and stability:
\[
\rho_{O} = 0.5, \quad
\rho_{I}= 1, \quad
\epsilon_O=\epsilon_I = 10^{-7}, T_O=T_I=50.
\]

We evaluated the method's performance across a range of target ranks $r$ and sparsity  levels $s$. For each $(r, s)$ pair, we ran 10 independent trials. In each trial, we generated a ground truth low-rank matrix $\mL_0$ and a sparse matrix $\mS_0$ as described in Table~\ref{tab:2d}. 
A trial was considered successful if both $\text{RelErr}(\mL_0, \hat \mL)$ and $\text{RelErr}(\mS_0, \hat \mS)$ were below a fixed threshold $\varepsilon \in \{10^{-3}, 10^{-4}\}$, depending on the experiment.
To ensure comparability across all $(r, s)$ pairs and trials, the matrix $\mH$ is fixed for the duration of the experiments. 

\subsubsection{Random Filter}
%present an experiment following a setup similar to that of \cite{YXYJ09}. 
In the first setting, the entries of $\mH$ are drawn independently from the standard normal distribution, i.e., $\mH_{ij} \sim \mathcal{N}(0, 1)$ i.i.d.

We tested different sparse models to understand how the method performs under various types of corruption.  The Gaussian model represents dense noise; the uniform model corresponds to noise that is bounded and symmetric around zero. The impulsive model simulates large outliers by assigning fixed-magnitude values with random signs, scaled relative to the largest entry in $\mL_0$. See Table \ref{tab:2d}. The setup is similar to that of~\cite{YXYJ09}. 

The results are in Figure \ref{fig:Hgauss}, where the Gaussian sparsity model  allows the largest rank and sparsity, and the impulsive model shows the worst performance. Interestingly, this result is opposite to that of \cite{YXYJ09}, where $\mH$ is the identity.

\begin{figure}[htbp]
\begin{tabular}{cc}
 \multicolumn{2}{c}{(a) $\mS_0$ follows Gaussian model} \\
 \includegraphics[width=0.48\textwidth]{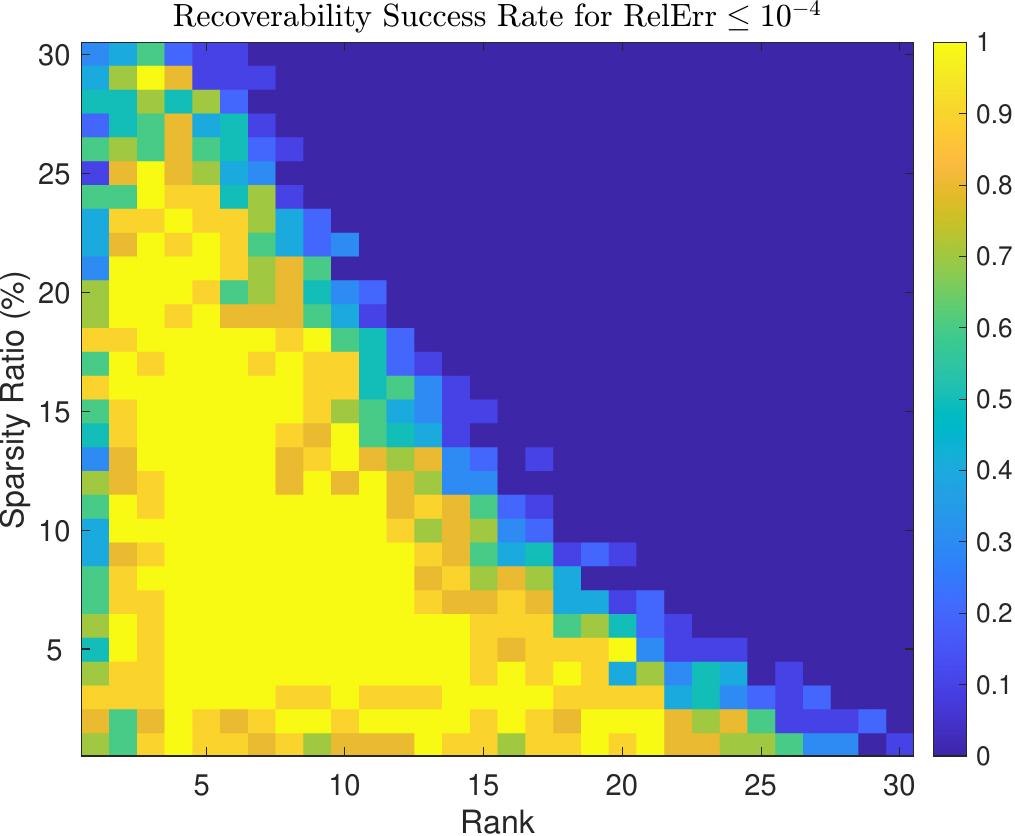} & \includegraphics[width=0.48\textwidth]{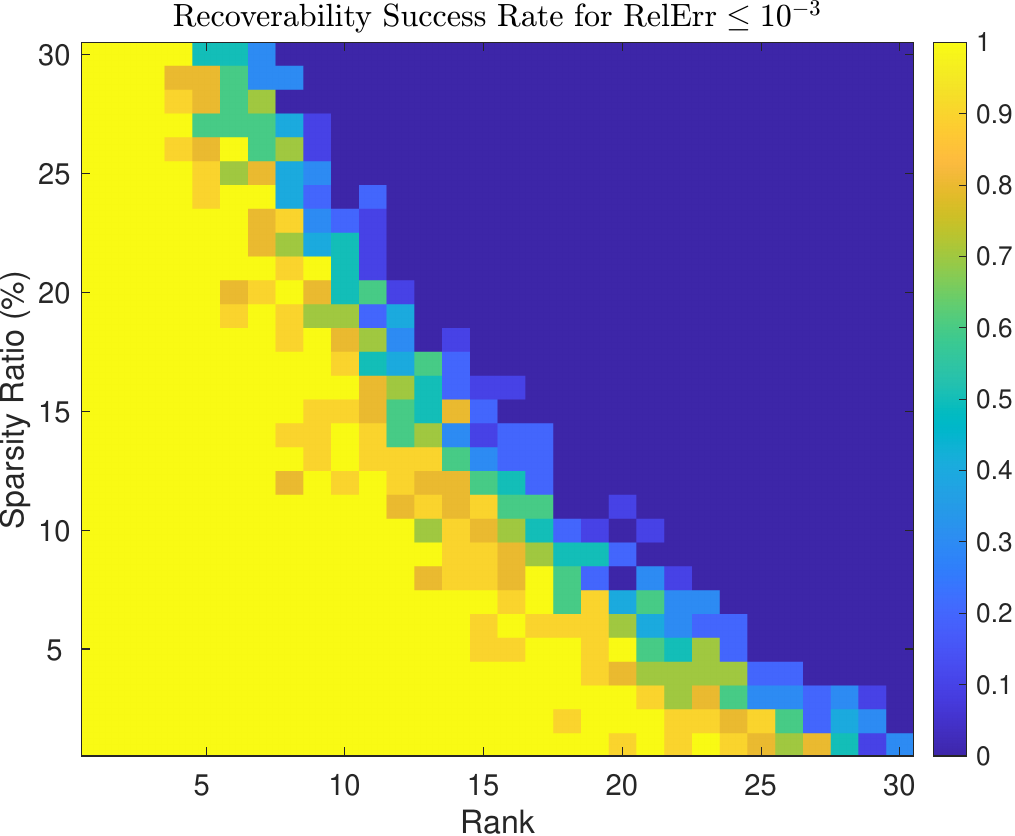}\\
\\
 \multicolumn{2}{c}{(b) $\mS_0$ follows uniform model} \\
  \includegraphics[width=0.48\textwidth]{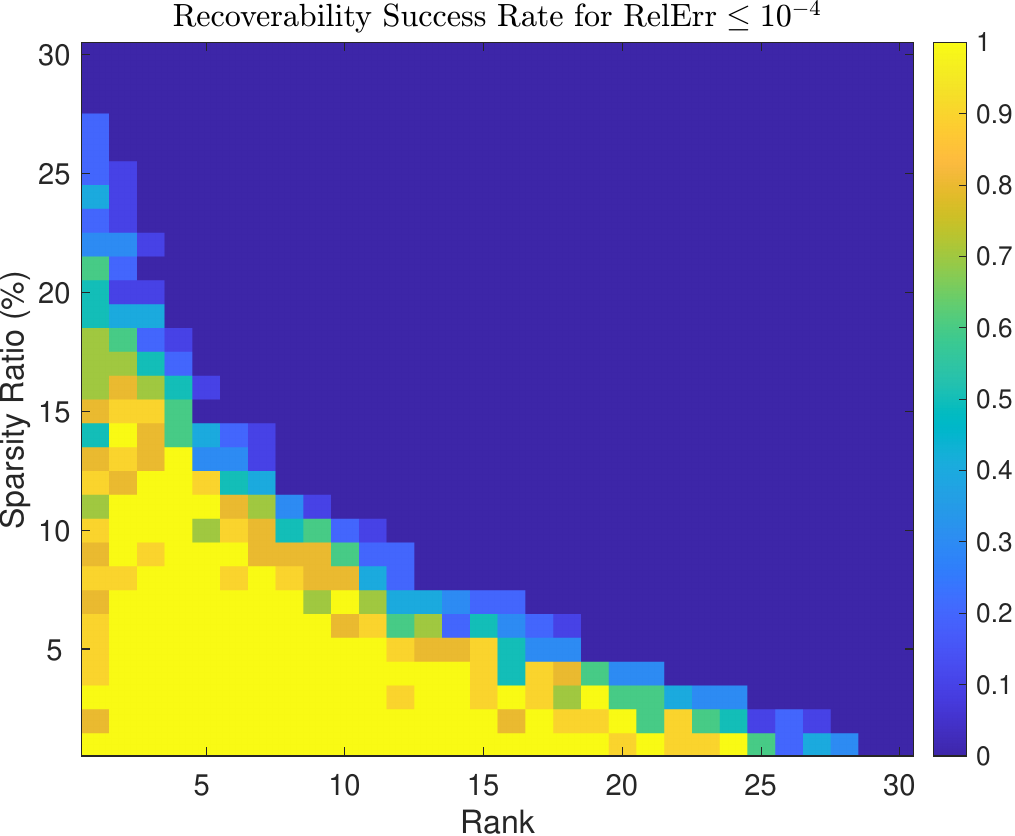} & \includegraphics[width=0.48\textwidth]{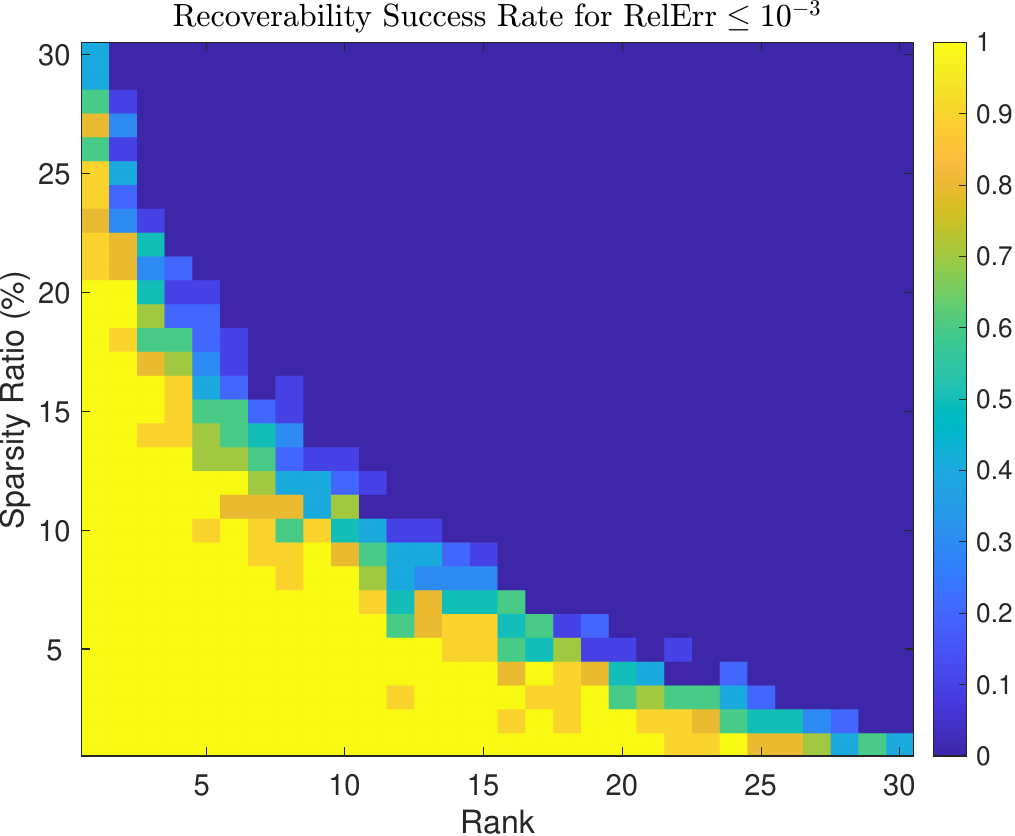}\\
  \\
   \multicolumn{2}{c}{(c) $\mS_0$ follows impulsive model} \\
   \includegraphics[width=0.48\textwidth]{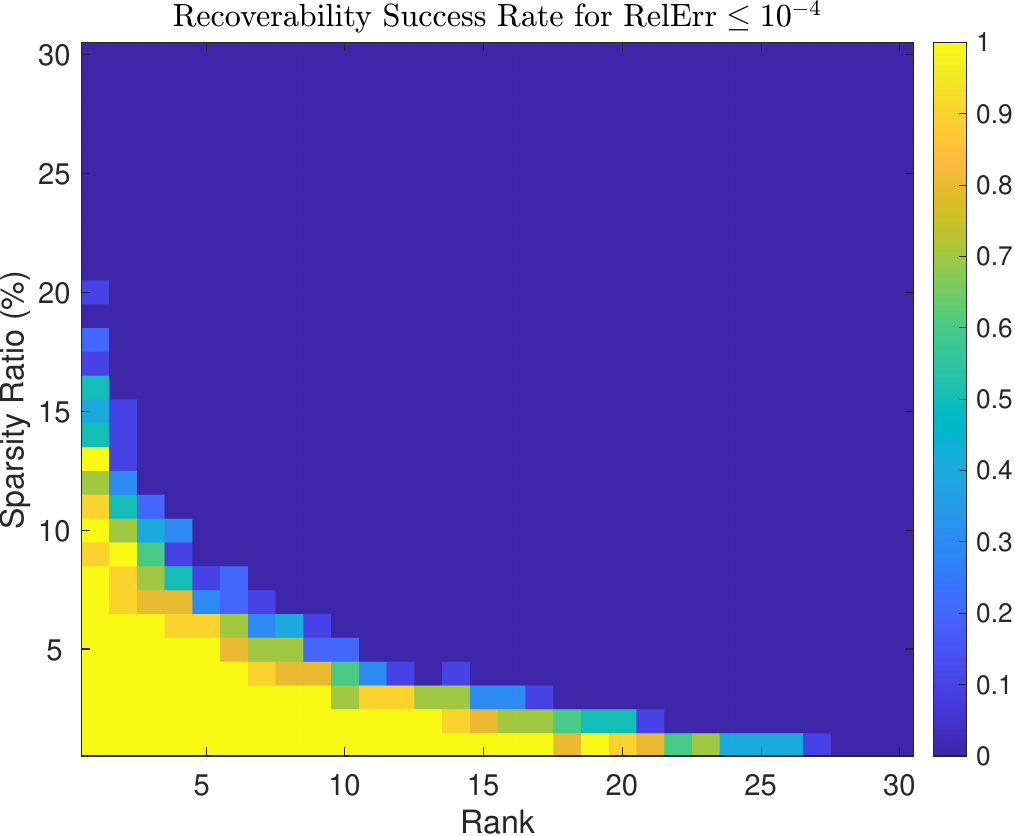} & \includegraphics[width=0.48\textwidth]{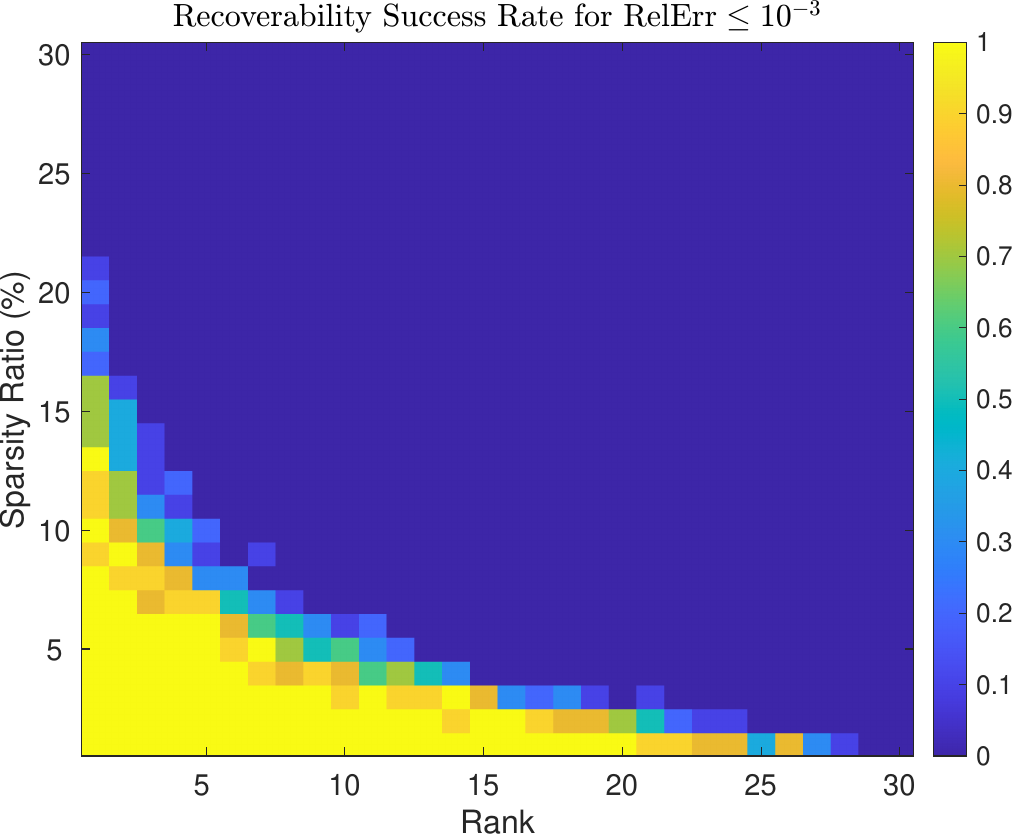}
  \end{tabular}
\caption{Recoverability results when $\mH$ is Gaussian}\label{fig:Hgauss}
\end{figure}

\subsubsection{Circulant Filter}

In the second setting, $\mH$ is taken to be the circulant matrix in \eqref{equ:H}. The results are shown in Figure \ref{fig:Hcirc}, where a similar pattern to that in Figure \ref{fig:Hgauss} is observed.

\begin{figure}[htbp]
\begin{tabular}{cc}
 \multicolumn{2}{c}{(a) $\mS_0$ follows Gaussian model} \\
 \includegraphics[width=0.48\textwidth]{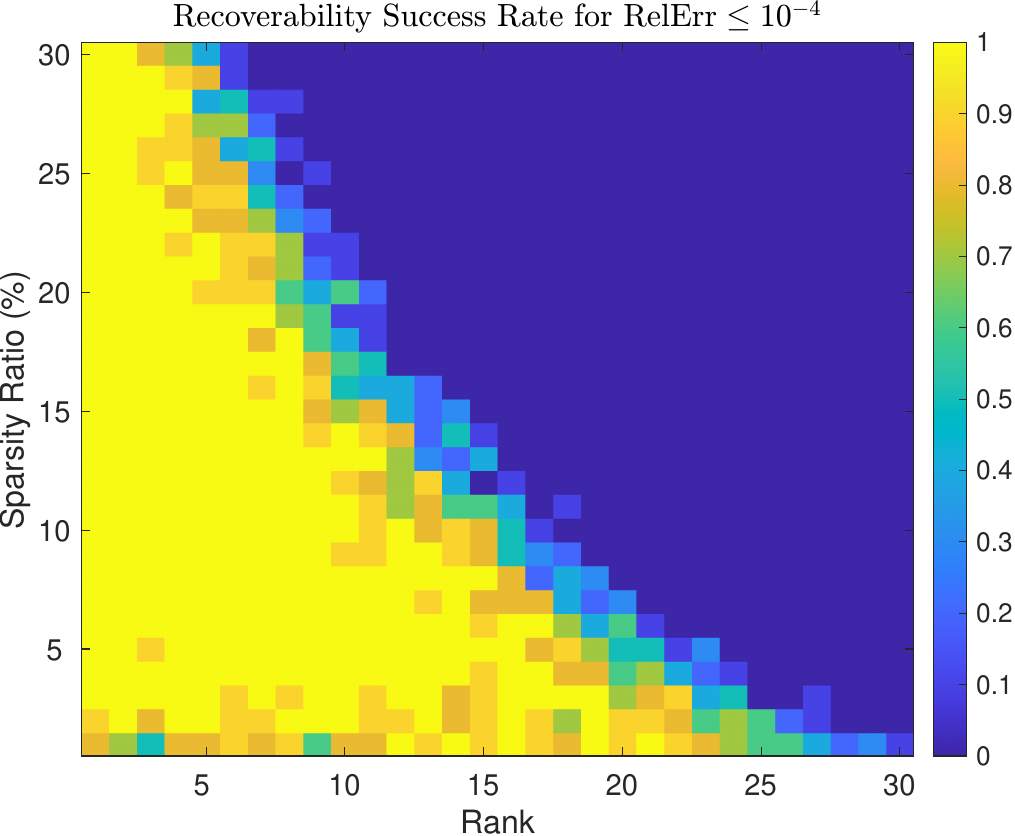} & \includegraphics[width=0.48\textwidth]{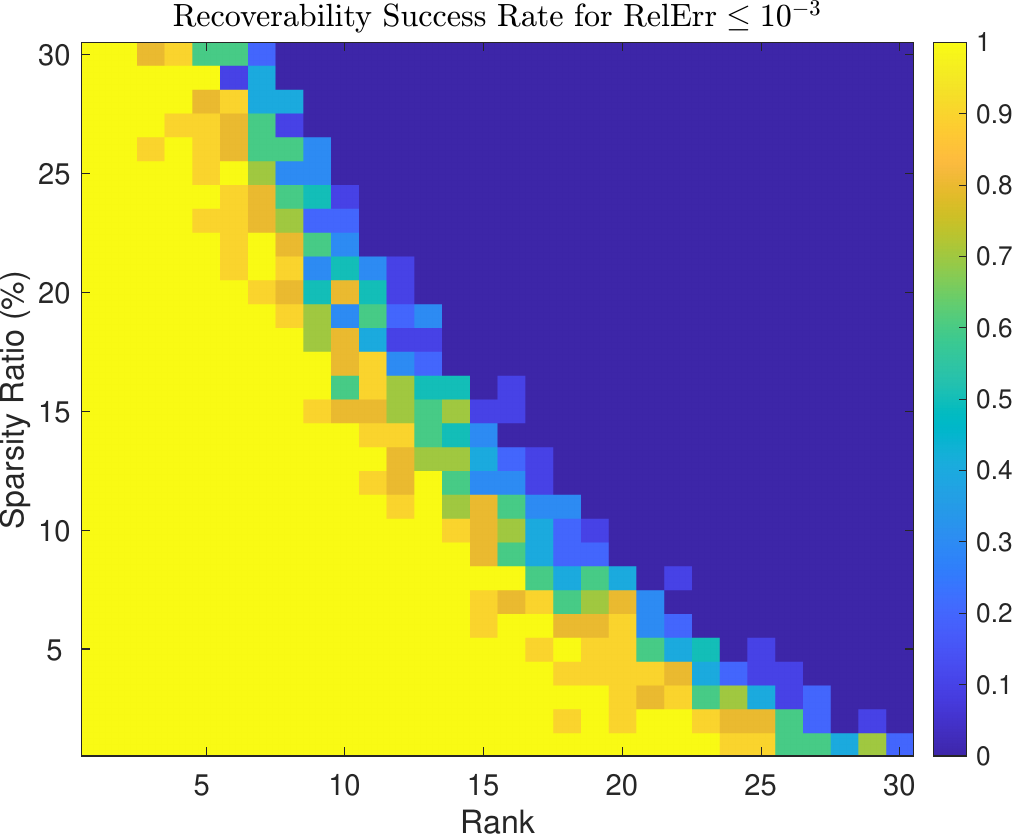}\\
\\
 \multicolumn{2}{c}{(b) $\mS_0$ follows uniform model} \\
  \includegraphics[width=0.48\textwidth]{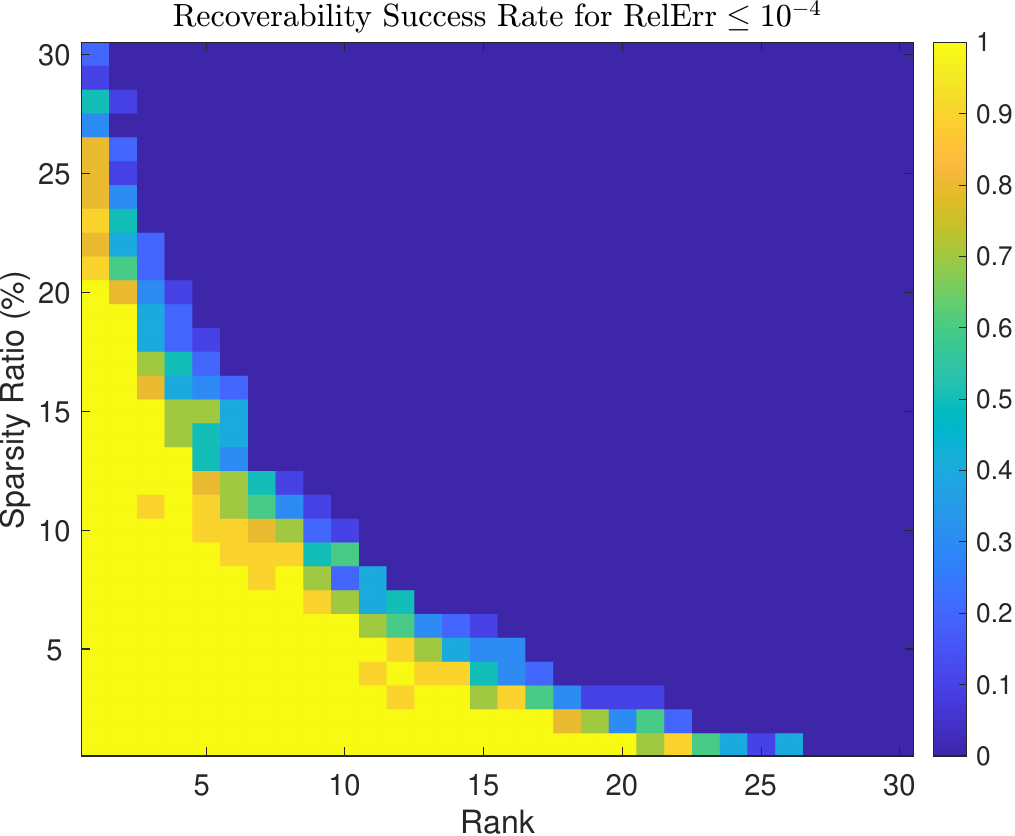} & \includegraphics[width=0.48\textwidth]{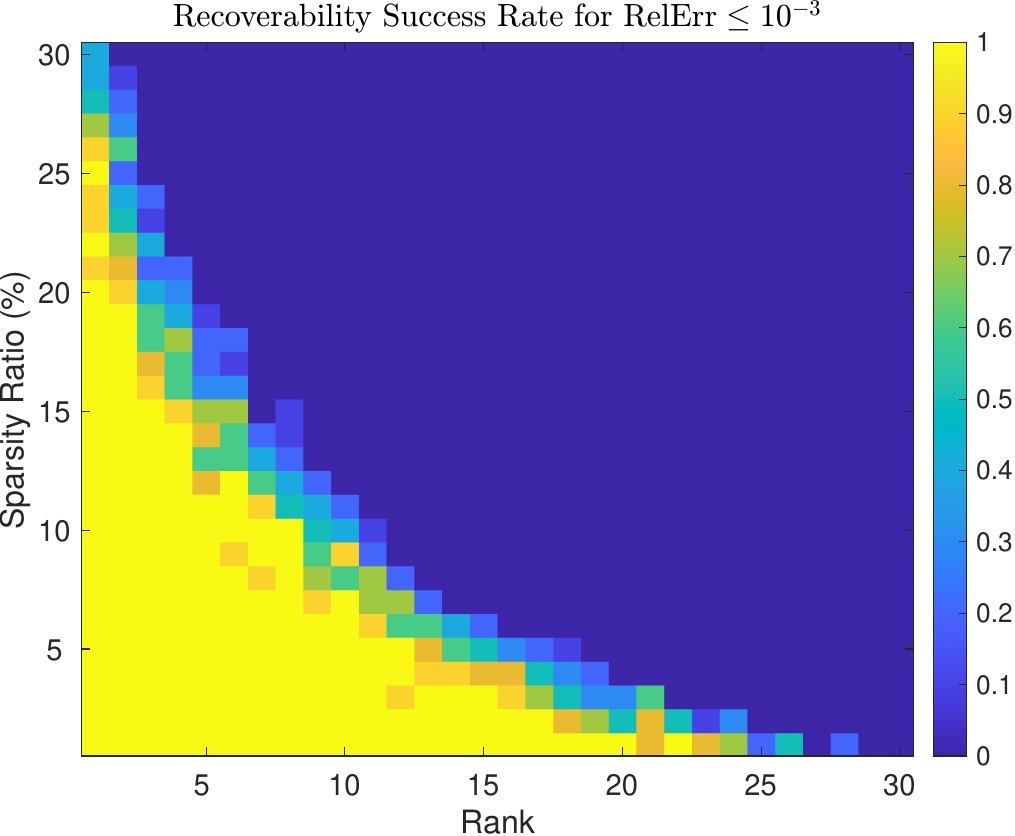}\\
  \\
   \multicolumn{2}{c}{(c) $\mS_0$ follows impulsive model} \\
   \includegraphics[width=0.48\textwidth]{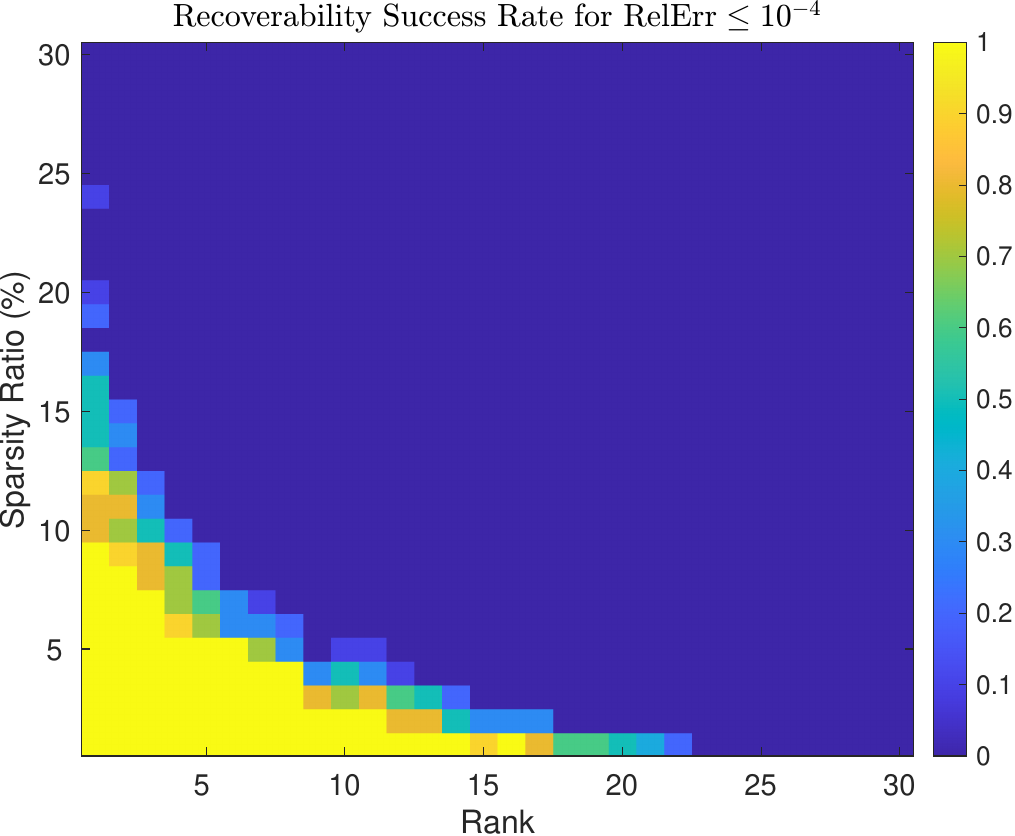} & \includegraphics[width=0.48\textwidth]{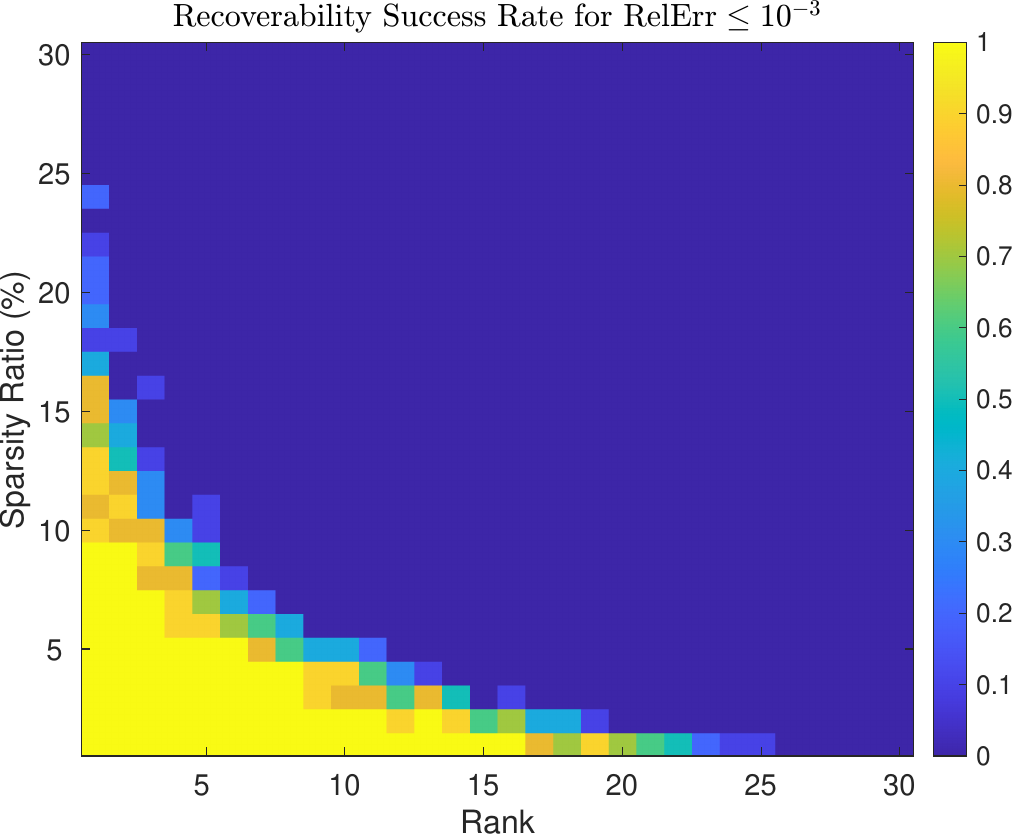}
  \end{tabular}
\caption{Recoverability results when $\mH$ is circulant (deterministic)}\label{fig:Hcirc}
\end{figure}

\subsection{Background removal and deblurring in surveillance videos}\label{sec:vid}

Let $\Vc=\Bc + \Oc$ represent the tensor of a surveillance video, where each pixel value has been normalized to lie within the interval $[0,1]$. 
Here, $\Bc$ corresponds to the static background, so $\mat(\Bc)$ is approximately rank 1, while $\Oc$ represents the moving objects. Since a pixel value of 1 is white and 0 is black, the extracted moving objects $\Oc$ are expected to have mostly white pixels. Thus, $1-\Oc$ is expected to be sparse. 
%We have $1-\mathcal{V}=-\mathcal{B}+1-\mathcal{O}.$
Let $\Mc = 1-\Vc, \Lc = -\Bc$, and $\Sc_0 =1-\Oc$. Then we have:
$$\Mc = \Lc+\Sc_0 \Longleftrightarrow \mM = \mL+\mS_0,$$
where $\mL=\mat(\Lc)$ is low-rank and $\mS_0=\mat(\Sc_0)$ is sparse.

Now suppose we obtain a blurred video $\mM_0 = \mH\mM=\mH\mL+\mH\mS_0:=\mL_0+\mH\mS_0$, which matches the setting in \eqref{equ:M0}. Applying Algorithm \ref{alg:gts_c} allows us to recover $\mS_0$, i.e., the \textbf{deblurred} moving objects. Under this framework, we simultaneously achieve background removal and deblurring. The algorithm also recovers $\mL_0=\mH\mL$, which corresponds to the blurred background video. However, this is typically not of primary interest in practical applications.

We use a video from the  BMC 2012 Background Models Challenge Dataset~\cite{vacavant2012benchmark}. Each frame is of size $240\times 320$ pixels, with a total of 300 frames. The video is blurred using a filter $\mH=\mG_2\times \mG_1,$ where the $\mG_i$'s are constructed as in \eqref{equ:Gi}, where
$$\mE_1= \begin{bmatrix}
0.4375&0.5625\\ 0.5625   & 0.4375
\end{bmatrix}, \quad 
\mE_2=
\begin{bmatrix}
 0.1123   & 0.3459   & 0.3446  &  0.1972\\
    0.1972  &  0.1123  &  0.3459 &   0.3446\\
    0.3446  &  0.1972  &  0.1123  &  0.3459\\
    0.3459  &  0.3446   & 0.1972 &   0.1123
    \end{bmatrix}.
    $$
Figure \ref{fig:vid}(a) shows the original video (10th frame), denoted by $\mM$, and Figure \ref{fig:vid}(b) displays the blurred version $\mM_0=\mH\mM$.

We then apply Algorithm \ref{alg:gts_c} with parameters $\rho_O=\rho_I=1, \epsilon_O = 10^{-7}, \epsilon_I=10^{-5}, T_O=100$, and $T_I = 10$. Figure \ref{fig:vid}(c) displays the recovered moving objects $S_0$ when LASSO is solved by FISTA, which are clearly deblurred. Figure \ref{fig:vid}(d) shows the recovered background when LASSO is solved by FISTA, which remains blurred as expected. The third row Figure \ref{fig:vid}(e)(f) is visually the same as the second row, but these are recovered when LASSO is solved by ADMM. Both methods took the maximum 100 iterations, but FISTA took only 266 seconds while ADMM took 704 seconds.

\begin{figure}[htb]
\begin{tabular}{cc}
(a) original (unknown)&(b) blurred (given $\Mc_0$)\\
\includegraphics[width=0.5\textwidth]{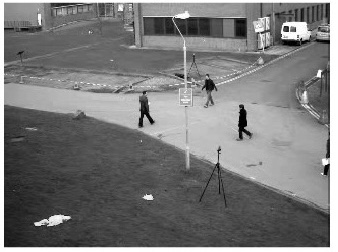}&
\includegraphics[width=0.5\textwidth]{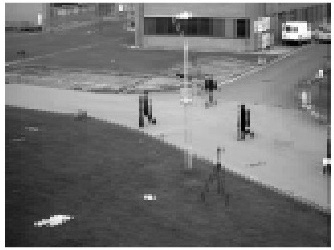}\\\\
(c) recovered moving objects - FISTA &(d) recovered background - FISTA\\
\includegraphics[width=0.5\textwidth]{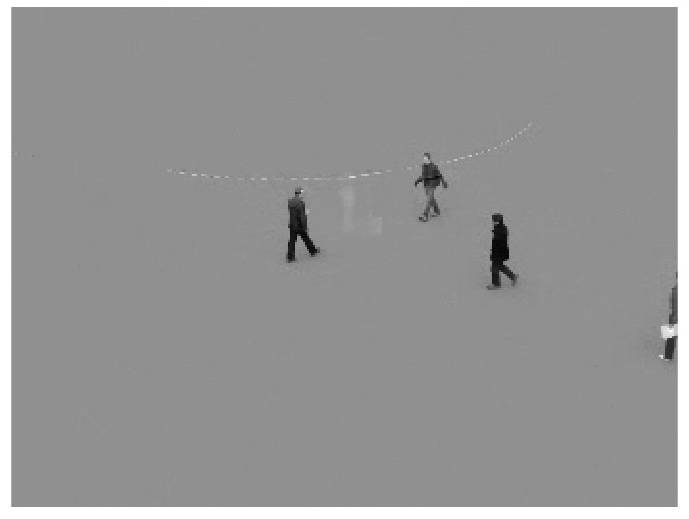}&
\includegraphics[width=0.5\textwidth]{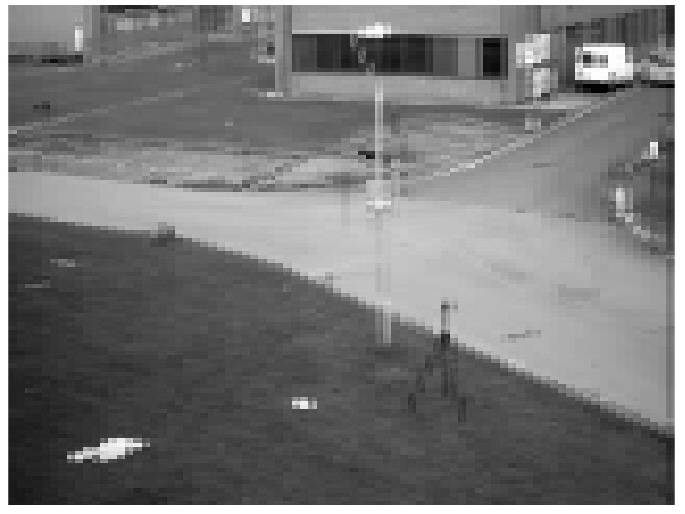}\\\\
(e) recovered moving objects - ADMM &(f) recovered background- ADMM\\
\includegraphics[width=0.5\textwidth]{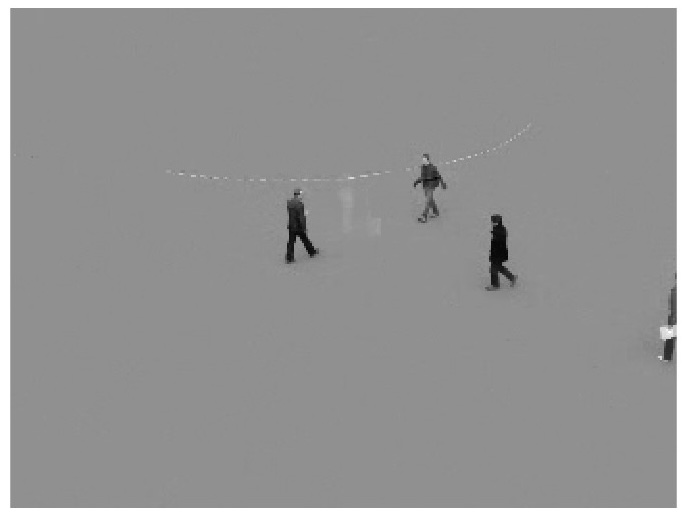}&
\includegraphics[width=0.5\textwidth]{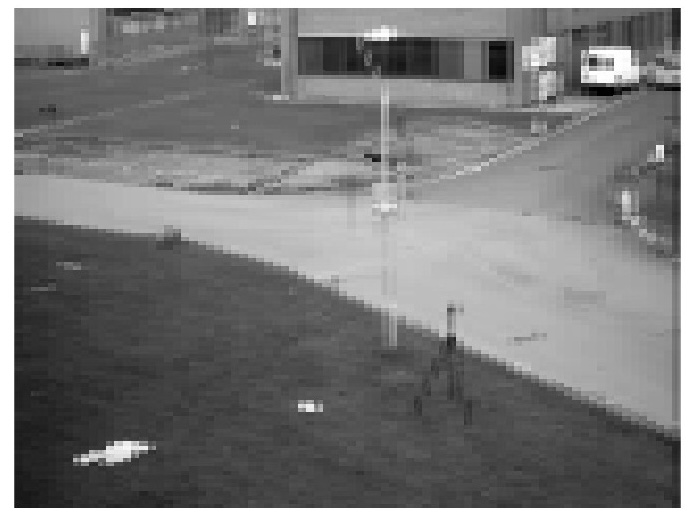}
\end{tabular}
\caption{Simultaneous background removal and deblurring}\label{fig:vid}
\end{figure}

%%%%%%%%%%%%%%%%%%%%%%%%%%%%%%%%%%%
\section{Discussion}
In this paper, we presented a comprehensive framework for solving the generalized matrix separation problem. Through ADMM and utilizing efficient algorithmic strategies for various structures of the filtering matrix $\mH$, we demonstrated significant improvements in both computational efficiency and recovery accuracy. We introduced a preconditioning method that improves convergence and is supported by theoretical guarantees under standard incoherence assumptions.
Based on this preconditioning technique, \textbf{we recommend Algorithm~\ref{alg:gms_c}  for the general matrix separation problem} and Algorithm~\ref{alg:gts_c}  for the specialized tensor case. Additionally, we provide parameter choices based on extensive numerical experiments.

Our numerical experiments confirm the robustness and scalability of the proposed algorithms across a variety of scenarios, including applications to video data for simultaneous background removal and deblurring. The ability of our methods to efficiently handle circulant, separable, and block-structured filters shows their practical relevance in real-world problems. 

The Singular Value Thresholding step that appears in every iteration of Algorithm~\ref{alg:gms_c} or Algorithm~\ref{alg:gts_c}, remains the primary computational bottleneck. Randomized SVD~\cite{rSVD} should be considered to speed up the computations for more realistic implementation. Moreover, 
%Future work may 
we may consider extensions to adaptive or time-varying filters and aim to establish stronger recovery guarantees under broader conditions.

\section*{Acknowledgments}

X. Chen is partially funded by NSF DMS-2307827. X. Chen would like to thank UNC
Wilmington’s Research Hub initiative.

\section*{Appendix}
\begin{proof}[Proof of Lemma \ref{lem}:]

Let $\diag(\mA) = [a_1,a_2 \cdots , a_n]^\top$ and $\diag(\mB) = [ b_1, b_2 \cdots , b_m]^\top$. 
%We have
%$$
%\diag(\mA \otimes \mB) = [ a_1b_1, \cdots, a_1b_m, \cdots, a_nb_1, \cdots, a_nb_m ] ^\top.
%$$
By definition of $\im(\cdot)$, 
\begin{align*}
\im_{m, n}(\diag(\mA \otimes \mB)) = \im_{m, n}\left(
\begin{bmatrix}
a_1\mB\\
&a_2\mB\\
&&\ddots\\
&&&a_n\mB\end{bmatrix}\right)=
\begin{bmatrix}
a_1b_1 & \cdots & a_nb_1 \\
\vdots & \ddots & \vdots \\
a_1b_m & \cdots & a_nb_m
\end{bmatrix},
\end{align*}
which is exactly the right hand side of \eqref{equ:lem}. 
\end{proof}

\subsection*{LASSO}
Given $\mA\in\R^{m\times n}, \vb\in\R^m, \lambda>0$, the Least Absolute Shrinkage and Selection Operator (LASSO)  solves the following optimization problem:
\begin{equation}\label{equ:lasso}
\min_{\vx\in\R^{n}} \frac{1}{2} \| \mA \vx - \vb \|_2^2 + \lambda \| \vx \|_1.
\end{equation}
The constant $\lambda > 0$ balances data fitting with regularization. If $\lambda$ is close to 0, then the solution $\vx$ will be similar to the ordinary least squares solution  \cite{HT09}. 

%%%%%%%%% Adding FISTA
LASSO has becoming an important tool in model selection and other sparsity constrained problems. One of the most popular methods for solving problem \eqref{equ:lasso} is in the class of iterative shrinkage-thresholding algorithms (ISTA) \cite{FN03, DDD04, FNW08}. It can be viewed as a proximal gradient method, which can be suitable for optimization problems consisting of a smooth term plus a non-smooth term. In this case, the smooth term is the quadratic $\frac{1}{2}\|\mA \vx - \vb\|_2^2$, and the non-smooth term is the $\ell_1$ regularizer $\lambda \|\vx\|_1$. 
%An alternative approach to solving \eqref{equ:lasso} is the proximal gradient method, which is designed for optimization problems consisting of a 
The method alternates between a gradient step on the smooth term, and a proximal step on the non-smooth term, which reduces to the soft-thresholding operator.

FISTA (Fast Iterative Shrinkage-Thresholding Algorithm) \cite{BATM2009} is an accelerated variant of ISTA, achieving a convergence rate of $O(1/k^2)$ (compared to $O(1/k)$ for ISTA) while keeping its simplicity. Let $f:\R^n\rightarrow\R$ be a continuously differentiable function with Lipschitz constant of the gradient $L$ such that
$$\|\nabla f(\vx)-\nabla f(\vy)\|_2\leq L\|\vx-\vy\|_2, \text{ for all }\vx, \vy\in\R^m.$$
In particular, for $f(\vx)=\frac{1}{2}\|\mA \vx - \vb\|_2^2$, the (smallest) Lipschitz constant of the gradient is $L=\|\mA\|^2$. FISTA can be applied to a general class of problems, but in the particular case of \eqref{equ:lasso}, a simple version (with constant stepsize in~\cite{BATM2009}) is the following
\begin{align*}
&\vx^{k+1}=\argmin_{\vx}\left\{\lambda\|\vx\|_1+\frac{L}{2}\|\vx-(\vy^k-\frac{1}{L}\nabla f(\vy^k))\|^2_2\right\},\\
&t^{k+1}=\frac{1+\sqrt{1+4(t^k)^2}}{2},\\
&\vy^{k+1}=  \vx^{k} + \frac{t_k - 1}{t_{k+1}}(\vx^{k+1} - \vx^k).
\end{align*}
Note that the $\vx$ update above can be further calculated as $\vx^{k+1}=S_{\lambda/L}(\vy^k-\frac{1}{L}\nabla f(\vy^k))=S_{\lambda/L}(\vy^k-\frac{1}{L}\mA^\top(\mA\vy^k-\vb))$.
Algorithm~\ref{alg:fista} presents the FISTA iteration for \eqref{equ:lasso}. Note that with the same $\mA$, we can solve for multiple right hand sides, resulting $\vb$ to be a matrix in Algorithm \ref{alg:fista}.

\begin{algorithm}[htb]
\caption{LASSO($\mA, \vb, \lambda; T, \epsilon$): FISTA \cite{BATM2009}}\label{alg:fista}
\begin{algorithmic}
\State \textbf{Input:} $\mA\in\R^{m\times n}, \vb\in\R^{m\times l}, \lambda >0$,  maximum number of iterations $T$ and the tolerance $\epsilon$.
\State \textbf{Output:} $\vx^{t}\in\R^{n\times l}$: an approximation of solution of \eqref{equ:lasso} at last iteration $t$
\State $L=\|\mA\|^2$
\State \textbf{Initialize:} $\vx^0, \vy^0\in \R^{n\times l}, t^0 = 1$
\For{$k=0,1,\cdots,T-1$}
    \State 1: $\vx^{k+1} = S_{\lambda/L}\!\left(\vy^k - \tfrac{1}{L}\mA^\top(\mA\vy^k - \vb)\right)$ 
    \State 2: $t^{k+1} = \frac{1 + \sqrt{1 + 4(t^k)^2}}{2}$
    \State 3: $\vy^{k+1} = \vx^{k} + \frac{t^k - 1}{t^{k+1}}(\vx^{k+1} - \vx^k)$
    \State Terminate if $\|\vx^{k+1}-\vx^k\|_F/(\|\vx^k\|_F+1) < \epsilon$ 
\EndFor
\end{algorithmic}
\end{algorithm}

%%%%%%%%%%%%%%%%%%%%%%%%%%%%%

We can also use ADMM to solve LASSO problems. We recast \eqref{equ:lasso} as
\[
(\hat \vx, \hat \vz) = \argmin_{\vx,\vz} \frac{1}{2} \| \mA \vx - \vb \|_2^2 + \lambda \| \vz \|_1, \qquad \text{subject to }\vx-\vz=0,
\]
which is a special case of ADMM. Then, \eqref{equ:admm1}--\eqref{equ:admm3} become
\begin{align}\label{equ:l1}
\vx^{k+1} &= (\mA^\top \mA + \rho \mI)^{-1}(\mA^\top \vb + \rho (\vz^k - \vu^k))\\
\vz^{k+1} &= S_{\lambda/\rho}(\vx^{k+1} + \vu^k)\\\label{equ:l3}
\vu^{k+1} &= \vu^k + \vx^{k+1} - \vz^{k+1}.
\end{align}

The first step \eqref{equ:l1} is to solve the linear system:
\begin{align}\label{lasso:step1}
(\mA^\top \mA + \rho \mI)\vx^{k+1} = \mA^\top \vb + \rho (\vz^k - \vu^k).
\end{align}

Since the same coefficient matrix, $\mA^\top \mA + \rho \mI$, is used repeatedly, we gain efficiency by pre-factorizing it.

For example, we may precompute an eigendecomposition of $\mA^\top \mA$ as 
\[
\mA^\top \mA = \mV \mathbf{\Sigma} \mV^\top,
\]
where $\mV$ is orthogonal and $\mathbf{\Sigma}$ is diagonal. Then
\[
\mA^\top \mA + \rho \mI = \mV (\mathbf{\Sigma} + \rho \mI) \mV^\top,
\]
and the solution to $(\mA^\top \mA + \rho \mI)\vx = \vr$ is
\[
\vx = \mV (\mathbf{\Sigma} + \rho \mI)^{-1}\mV^\top \vr.
\]

In practice, rather than forming $\mA^\top \mA$, it is preferable from a numerical perspective to compute a singular value decomposition (SVD) of $\mA$ (or a truncated SVD when $\mA$ is large). This avoids squaring the condition number and is typically more numerically stable.
%This is efficient to compute since $\mathbf{\Sigma} + \rho \mI$ is diagonal. 
This leads to Algorithm~\ref{alg:lasso2} following \eqref{equ:l1}--\eqref{equ:l3}.

\begin{algorithm}[htb]
\caption{LASSO($\mA, \vb, \lambda; \rho, T, \epsilon$): ADMM}\label{alg:lasso2}
\begin{algorithmic}
\State\textbf{Input:} $\mA\in\R^{m\times n}, \vb\in\R^{m\times l}, \lambda >0, \rho>0$, maximum number of iterations $T$ and the tolerance $\epsilon$.
\State\textbf{Output:} $\vx^{t}\in\R^{n\times l}$: an approximation of solution of \eqref{equ:lasso} at last iteration $t$
\State\textbf{Initialize:} $\vz^0, \vu^0 \in\R^{n\times l}$
\State Compute an eigendecomposition $\mA^\top \mA = \mV \mathbf{\Sigma} \mV^\top$
\State $Atb = \mA^\top \vb$ %\hfill{$O(mnl)$}
\For{$k=0,1,\cdots,T-1$} 
\State 1: Solve $(\mathbf{\Sigma} + \rho \mI)\vy = \mV^\top(Atb + \rho (\vz^k - \vu^k))$, then set $\vx^{k+1} = \mV \vy$
\State 2: $\vz^{k+1} = S_{\lambda/\rho}(\vx^{k+1} + \vu^k)$ %\hfill{$O(ml)$}
\State 3: $\vu^{k+1} = \vu^k + \vx^{k+1} - \vz^{k+1}$ %\hfill{$O(ml)$}
\State Terminate if $\|\vx^{k+1}-\vx^k\|_F/(\|\vx^k\|_F+1) <\epsilon$ 
\EndFor
\end{algorithmic}
\end{algorithm}

%\begin{remark}
%Traditionally, $\vb\in\R^{m\times1}$ is a  vector, but Algorithms \ref{alg:lasso2}, \ref{alg:lasso-chol}, and \ref{alg:lasso_c}, will work exactly the same if $\vb\in\R^{m\times l}$. The output has the same dimension as $\vb$.
%\end{remark}
Comparing Algorithm \ref{alg:fista} with Algorithm \ref{alg:lasso2}, we see that FISTA is very simple to implement and it does not require parameter tuning.

\bibliographystyle{amsplain}
\bibliography{ref_alg}

\providecommand{\bysame}{\leavevmode\hbox to3em{\hrulefill}\thinspace}
\providecommand{\MR}{\relax\ifhmode\unskip\space\fi MR }
% \MRhref is called by the amsart/book/proc definition of \MR.
\providecommand{\MRhref}[2]{%
  \href{http://www.ams.org/mathscinet-getitem?mr=#1}{#2}
}
\providecommand{\href}[2]{#2}
\begin{thebibliography}{10}

\bibitem{BATM2009}
Amir Beck and Marc Teboulle, \emph{A fast iterative shrinkage-thresholding
  algorithm for linear inverse problems}, SIAM journal on imaging sciences
  \textbf{2} (2009), no.~1, 183--202.

\bibitem{BS11}
Stephen Boyd, Neal Parikh, Eric Chu, Borja Peleato, and Jonathan Eckstein,
  \emph{Distributed optimization and statistical learning via the alternating
  direction method of multipliers}, Foundations and Trends® in Machine
  Learning \textbf{3} (2011), no.~1, 1--122.

\bibitem{CCW19}
HanQin Cai, Jian-Feng Cai, and Ke~Wei, \emph{Accelerated alternating
  projections for robust principal component analysis}, Journal of Machine
  Learning Research \textbf{20} (2019), no.~20, 1--33.

\bibitem{Ce20}
HanQin Cai, Keaton Hamm, Longxiu Huang, Jiaqi Li, and Tao Wang, \emph{Rapid
  robust principal component analysis: Cur accelerated inexact low rank
  estimation}, IEEE Signal Processing Letters \textbf{28} (2020), 116--120.

\bibitem{CCS10}
Jian-Feng Cai, Emmanuel~J Cand{\`e}s, and Zuowei Shen, \emph{A singular value
  thresholding algorithm for matrix completion}, SIAM Journal on optimization
  \textbf{20} (2010), no.~4, 1956--1982.

\bibitem{CLMW11}
Emmanuel~J Cand{\`e}s, Xiaodong Li, Yi~Ma, and John Wright, \emph{Robust
  principal component analysis?}, Journal of the ACM (JACM) \textbf{58} (2011),
  no.~3, 1--37.

\bibitem{C11}
Venkat Chandrasekaran, Sujay Sanghavi, Pablo~A Parrilo, and Alan~S Willsky,
  \emph{Rank-sparsity incoherence for matrix decomposition}, SIAM Journal on
  Optimization \textbf{21} (2011), no.~2, 572--596.

\bibitem{CW25}
Xuemei Chen and Rongron Wang, \emph{A masked matrix separation problem: A first
  analysis}, arXiv:2504.19025.

\bibitem{DDD04}
Ingrid Daubechies, Michel Defrise, and Christine De~Mol, \emph{An iterative
  thresholding algorithm for linear inverse problems with a sparsity
  constraint}, Communications on Pure and Applied Mathematics: A Journal Issued
  by the Courant Institute of Mathematical Sciences \textbf{57} (2004), no.~11,
  1413--1457.

\bibitem{DY16}
Wei Deng and Wotao Yin, \emph{On the global and linear convergence of the
  generalized alternating direction method of multipliers}, Journal of
  Scientific Computing \textbf{66} (2016), 889--916.

\bibitem{dutta2019nonconvex}
Aritra Dutta, Filip Hanzely, and Peter Richt{\'a}rik, \emph{A nonconvex
  projection method for robust pca}, Proceedings of the AAAI conference on
  artificial intelligence, vol.~33, 2019, pp.~1468--1476.

\bibitem{FN03}
M{\'a}rio~AT Figueiredo and Robert~D Nowak, \emph{An em algorithm for
  wavelet-based image restoration}, IEEE Transactions on Image Processing
  \textbf{12} (2003), no.~8, 906--916.

\bibitem{FNW08}
M{\'a}rio~AT Figueiredo, Robert~D Nowak, and Stephen~J Wright, \emph{Gradient
  projection for sparse reconstruction: Application to compressed sensing and
  other inverse problems}, IEEE Journal of selected topics in signal processing
  \textbf{1} (2008), no.~4, 586--597.

\bibitem{gabay1976dual}
Daniel Gabay and Bertrand Mercier, \emph{A dual algorithm for the solution of
  nonlinear variational problems via finite element approximation}, Computers
  \& mathematics with applications \textbf{2} (1976), no.~1, 17--40.

\bibitem{GE14}
Euhanna Ghadimi, Andre Teixeira, Iman Shames, and Karl~Henrik Johansson,
  \emph{Optimal parameter selection for the alternating direction method of
  multipliers (admm): Quadratic problems}, IEEE Transactions on Automatic
  Control \textbf{60} (2014), no.~3, 644--658.

\bibitem{glowinski1975approximation}
Roland Glowinski and Americo Marroco, \emph{Sur l'approximation, par
  {\'e}l{\'e}ments finis d'ordre un, et la r{\'e}solution, par
  p{\'e}nalisation-dualit{\'e} d'une classe de probl{\`e}mes de dirichlet non
  lin{\'e}aires}, Revue fran{\c{c}}aise d'automatique, informatique, recherche
  op{\'e}rationnelle. Analyse num{\'e}rique \textbf{9} (1975), no.~R2, 41--76.

\bibitem{rSVD}
Nathan Halko, Per-Gunnar Martinsson, and Joel~A Tropp, \emph{Finding structure
  with randomness: Probabilistic algorithms for constructing approximate matrix
  decompositions}, SIAM review \textbf{53} (2011), no.~2, 217--288.

\bibitem{HT09}
Trevor Hastie, Robert Tibshirani, and Jerome Friedman, \emph{The elements of
  statistical learning: Data mining, inference, and prediction}, 2 ed.,
  Springer, New York, 2009.

\bibitem{kilmer2013third}
Misha~E Kilmer, Karen Braman, Ning Hao, and Randy~C Hoover, \emph{Third-order
  tensors as operators on matrices: A theoretical and computational framework
  with applications in imaging}, SIAM Journal on Matrix Analysis and
  Applications \textbf{34} (2013), no.~1, 148--172.

\bibitem{LCM10}
Zhouchen Lin, Minming Chen, and Yi~Ma, \emph{The augmented lagrange multiplier
  method for exact recovery of corrupted low-rank matrices}, arXiv preprint
  arXiv:1009.5055 (2010).

\bibitem{netrapalli2014non}
Praneeth Netrapalli, Niranjan UN, Sujay Sanghavi, Animashree Anandkumar, and
  Prateek Jain, \emph{Non-convex robust pca}, Advances in neural information
  processing systems \textbf{27} (2014).

\bibitem{admm_par}
Robert Nishihara, Laurent Lessard, Ben Recht, Andrew Packard, and Michael
  Jordan, \emph{A general analysis of the convergence of admm}, International
  conference on machine learning, PMLR, 2015, pp.~343--352.

\bibitem{vacavant2012benchmark}
Antoine Vacavant, Thierry Chateau, Alexis Wilhelm, and Laurent Lequievre,
  \emph{A benchmark dataset for outdoor foreground/background extraction},
  Asian Conference on Computer Vision, Springer, 2012, pp.~291--300.

\bibitem{wang2013solving}
Xiangfeng Wang, Mingyi Hong, Shiqian Ma, and Zhi-Quan Luo, \emph{Solving
  multiple-block separable convex minimization problems using two-block
  alternating direction method of multipliers}, Pacific Journal of Optimization
  \textbf{11} (2015), 645--667.

\bibitem{YXYJ09}
Xiaoming Yuan and Junfeng Yang, \emph{Sparse and low rank matrix decomposition
  via alternating direction method}, Pacific Journal of Optimization \textbf{9}
  (2009).

\end{thebibliography}

\end{document}